\documentclass[10pt,a4paper]{article}
\usepackage[utf8]{inputenc}
\usepackage{amsmath}
\usepackage{amsfonts}
\usepackage{amssymb}
\usepackage{amsthm}
\usepackage{color}
\usepackage{enumerate}
\usepackage{marvosym}
\usepackage{comment}
\usepackage{tikz-cd}
 \newtheorem{thm}{Theorem}[section]

 \newtheorem*{thm*}{Theorem}
 \newtheorem{cor}[thm]{Corollary}
 \newtheorem{lem}[thm]{Lemma}
 
 \theoremstyle{definition}
 \newtheorem{defn}[thm]{Definition}
 \theoremstyle{remark}
 \newtheorem{rem}[thm]{Remark}
 
 \newtheorem{ex}{Example}
 
 \numberwithin{equation}{section}
 
\newcommand{\vertiii}[1]{{\left\vert\kern-0.25ex\left\vert\kern-0.25ex\left\vert #1
  \right\vert\kern-0.25ex\right\vert\kern-0.25ex\right\vert}}

\newcommand{\tr}{\operatorname{Tr}}

\newcommand{\Span}{\operatorname{span}}

\newcommand{\linspan}{\operatorname{span}}
\newcommand{\ind}{\operatorname{ind}}

\title{Three short tales on the parity operator}
\author{Robert Fulsche}
\begin{document}
\maketitle
\begin{abstract}
    In this paper, we discuss three short topics related to the parity operator and his role in quantum harmonic analysis. We derive results for the Fredholm index of even and odd operators, discuss operators on which the modulation action acts continuous in operator norm and show that the parity operator plays a natural role in the operator-to-operator Fourier transform of quantum harmonic analysis.
\end{abstract}

\section{Introduction}

The parity operator is seemingly a rather straightforward operator: On $L^2(\mathbb R^n)$ it is defined as $Uf(t) = f(-t)$. This seemingly harmless operator is nevertheless of fundamental importance. It plays, for example, a key role in the mathematical formulation of quantum mechanics, as well as pseudodifferential operator theory and time-frequency analysis, and has been at the heart of several detailed discussions. We only mention the papers 
\cite{Canivell_Seglar1978, Epstein1965, Grossmann1976} as examples.

Equally well, the parity operator plays a crucial role in the framework of \emph{quantum harmonic analysis} \cite{Werner1984}. But its importance goes far beyond merely setting up the framework of quantum harmonic analysis (even though its importance there cannot be overestimated). Nevertheless, there are a few important aspects of the parity operator which have not been discussed in the literature so far. This note is dedicated to the discussion of three of such aspects. We very briefly give descriptions of these three topics:

In the beginning, we discuss the Fredholm theory of \emph{even and odd} operators. By this, we mean bounded linear operators which commute, respectively anticommute with the parity operator. In general, there is not much that one can say about such operators when it comes to their Fredholm spectrum. Nevertheless, when we assume that the operators are in $\mathcal C_1(\mathcal H)$, an algebra naturally occuring in quantum harmonic analysis, it turns out that even operators always have even Fredholm index and odd operators always have odd Fredholm index (provided they are Fredholm in the first place). This is the main result of the first topic.

In the second topic, we discuss operators on which the \emph{modulation action} of quantum harmonic analysis acts continuous in operator norm. As it turns out, composing with the parity operator reduces the study of such operators to the study of operators in the algebra $\mathcal C_1(\mathcal H)$ that we already mentioned in the previous paragraph. A little more surprisingly, it turns out that the class of operators which is contained in $\mathcal C_1(\mathcal H)$ and on which the modulation action acts continuously in operator norm is exactly the class of compact operators. 

In the last section, we will discuss the notion of operator-to-operator Fourier transform, which shows up naturally in the context of quantum harmonic analysis, but seemingly has not been noted before. In fact, we will show that this variant of the Fourier transform can be expressed (up to a multiplicative constant) by composing any operator simply with the parity operator. This observations can be utilized to give a very straightforward proof of the (well-known) fact that a Weyl pseudodifferential operator is contained in some operator ideal if and only if the Weyl pseudodifferential operator with the symplectic Fourier transform of the initial symbol being the new symbol is contained in the same operator ideal.

In addition to those three discussions, we will start the topic with a section on some preliminaries, setting up definitions and some basic results.

\section{Preliminaries}
The results discussed in this paper are closely connected with the setting of \emph{quantum harmonic analysis} of the group $\mathbb R^{2n}$, endowed with a symplectic form. This quantum harmonic analysis can be formulated in an abstract way in terms of certain projective unitary representations of $\mathbb R^{2n}$, but we will most of the time work in rather concrete settings, i.e., we will work with concrete realizations of these representations.
\begin{ex}
    \begin{enumerate}
        \item Let $\mathcal H = L^2(\mathbb R^n)$. Then, we consider the symmetric time-frequency shifts
        \begin{align*}
            W_{(x, \xi)}f(t) = e^{-i\xi \cdot t + i \frac{x \cdot \xi}{2}}f(t-x), \quad x, \xi, t \in \mathbb R^n, ~f \in L^2(\mathbb R^n).
        \end{align*}
        These unitary operators, also referred to as \emph{Weyl operators}, satisfy the exponentiated CCR relation:
        \begin{align*}
            W_{(x, \xi)} W_{(y, \eta)} = e^{-i\frac{\sigma((x, \xi), (y, \eta)}{2}} W_{(x+y, \xi+\eta)}, \quad (x, \xi), (y, \eta) \in \mathbb R^{2n}.
        \end{align*}
        Here, the symplectic form $\sigma$ is given by;
        \begin{align*}
            \sigma((x, \xi), (y, \eta)) = y\cdot \xi - x\cdot \eta.
        \end{align*}
        Usually, we will write $z = (x, \xi), w = (y, \eta)$ for points in the phase space $\mathbb R^{2n}$ to simplify the notation.
        \item Let $\mu$ be the Gaussian measure $d\mu(z) = \frac{1}{(2\pi)^n}e^{-\frac{|z|^2}{2}}~dz$ on $\mathbb C^n$. Then, the subspace $F^2(\mathbb C^n)$ of $L^2(\mathbb C^n, \mu)$ consisting of entire functions is a reproducing kernel Hilbert space. The reproducing kernel function $K_z(w)$ has the form:
        \begin{align*}
            K_z(w) = e^{\frac{w \cdot \overline{z}}{2}}, \quad z, w \in \mathbb C^n.
        \end{align*}
        The normalized reproducing kernels are then given by:
        \begin{align*}
            k_z(w) := \frac{K_z(w)}{\| K_z\|} = e^{\frac{w \cdot \overline{z}}{2}- \frac{|z|^2}{4}}, \quad z, w \in \mathbb C^n.
        \end{align*}
        When working in this setting, we identify the phase space $\mathbb R^{2n}$ with $\mathbb C^n$ in the natural way. Then, the Weyl operators $W_z$ are defined by:
        \begin{align*}
            W_z f(w) = k_z(w) f(w-z), \quad z, w \in \mathbb C^n, ~f \in F^2(\mathbb C^n).
        \end{align*}
        Also in this setting the Weyl operators satisfy:
        \begin{align*}
            W_z W_w = e^{-i\frac{\sigma(z,w)}{2}} W_{z+w}, \quad z, w \in \mathbb R^{2n} \cong \mathbb C^n.
        \end{align*}
        Note that the symplectic form can, in complex coordinates, also be expressed as $\sigma(z, w) = \operatorname{Im}(z \cdot \overline{w})$. A standard reference for analysis on the Fock space is \cite{Zhu} (which only deals with the case $n = 1$, but most results can be carried over to higher dimensions without troubles). 
    \end{enumerate}
\end{ex}
It is well-known that in both of these situations, the Weyl operators act irreducibly on $\mathcal H$. Hence, by the theorem of Stone and von-Neumann, both projective unitary representations are unitarily equivalent (for the same value of $n$). Furthermore, we want to note that in both situations one can also introduce a semiclassical parameter. Everything that will be discussed in this extension can be extended to every value of that semiclassical parameter without significant changes. For simplicity, we will nevertheless not carry that parameter through this paper.

The parity operator is now defined as a unitary operator $U$ satisfying the relation:
\begin{align*}
    W_z U = UW_{-z}, \quad z \in \mathbb R^{2n}.
\end{align*}
It is not hard to show that $U$ is unique (up to a multiplicative constant) and that $U$ can be chosen self-adjoint (see, for example, the discussion in \cite[Section 2]{Fulsche_Galke2025} on the parity operator). In concrete situations, it is not hard to check how the parity operator is realized:
\begin{ex}
    \begin{enumerate}
        \item In the situation with $\mathcal H = L^2(\mathbb R^n)$ described above, it is $Uf(t) = f(-t).$
        \item When $\mathcal H = F^2(\mathbb C^n)$, then the parity has essentially the same form: $Uf(z) = f(-z)$.
    \end{enumerate}
\end{ex}
An important space when it comes to the methods of quantum harmonic analysis is the $C^\ast$-algebra $\mathcal C_1(\mathcal H)$ defined by:
\begin{align*}
    \mathcal C_1(\mathcal H) := \{ A \in \mathcal L(\mathcal H): ~\| W_z A W_z^\ast - A\|_{op} \to 0, ~z \to 0\}.
\end{align*}
This algebra will appear prominently throughout the following sections. The expression $W_z A W_z^\ast$ is often termed the \emph{shift} of the operator $A$ and denoted 
\begin{align*}
    \alpha_z(A) := W_z A W_z^\ast
\end{align*} 
for $A \in \mathcal L(\mathcal H)$ and $z \in \mathbb R^{2n}$. Hence, $\mathcal C_1(\mathcal H)$ consists exactly of those $A \in \mathcal L(\mathcal H)$ such that $z \mapsto \alpha_z(A)$ is continuous in operator norm.

Further concepts from quantum harmonic analysis or operator theory will be introduced through the paper whenever it is necessary. We just want to mention that basic references to the theory that we use here are \cite{Werner1984, Luef_Skrettingland2018, Fulsche2020, Fulsche_Galke2025}.

\section{Even and odd operators}
In this section, we will work with $\mathcal H = F^(\mathbb C^n)$, the Fock space. By unitary equivalence, the main results then carry over to any other representation equivalent to the Fock space representation, in particular to the representation by symmetric time-frequency shifts on $L^2(\mathbb R^n)$.

\begin{defn}
    A bounded linear operator $A \in \mathcal L(\mathcal H)$ is called \emph{even} if $UAU = A$ and \emph{odd} if $UAU = -A$.
\end{defn}
In the following, we will not only work with even and odd operators, but also with even and odd functions. For this, we introduce the notation $\beta_-(f)$ for the function $\beta_-(f)(z) = f(-z)$. Hence, a function $f$ on $\mathbb C^n$ is even if $\beta_-(f) = f$ and odd if $\beta_-(f) = -f$.
\begin{ex}
    We denote by $P$ the orthogonal projection from $L^2(\mathbb C^n, \mu)$ to $F^2(\mathbb C^n)$. Then, for $f \in L^\infty(\mathbb C^n)$ the Toeplitz operator $T_f \in \mathcal L(F^2(\mathbb C^n))$ is defined by $T_f(g) = P(fg)$. Toeplitz operators satisfy the norm estimate $\| T_f\|\leq \| f\|_\infty$. 
    
   The Toeplitz operator $T_f$ is even if and only if $f$ is even and $T_f$ is odd if and only if $f$ is odd. Seeing that $f$ being even/odd is sufficient is straightforward. To see that this is also necessary for $T_f$ being even/odd, note that a bounded linear operator $A\in \mathcal L(F^2(\mathbb C^n))$ is even/odd if and only if its Berezin transform $\widetilde{A}(z) := \langle A k_z, k_z\rangle$ is even/odd (which is shown using the fact that $\widetilde{A} = 0$ if and only if $A = 0$). For a Toeplitz operator, $\widetilde{T_f}$ is given by $f \ast g$, where $g$ is the Gaussian $g(z) = \frac{1}{(2\pi)^n}e^{-\frac{|z|^2}{2}}$. Since $\beta_-(f \ast g) = \beta_-(f) \ast \beta_-(g)$ and $g$ is even ($\beta_-(g) = g$), as well as combining this with the fact that convolution with a Gaussian is injective, this concludes the statement.
\end{ex}
Every operator admits a canonical decomposition into an even and an odd operator:
\begin{lem}
    Let $A \in \mathcal L(\mathcal H)$.
    \begin{enumerate} 
    \item $A + UAU$ is even and $A - UAU$ is odd.
    \item $A$ can be written as $A = A_{even} + A_{odd}$, where $A_{even}$ is an even operator and $A_{odd}$ is an odd operator.
    \item The decomposition in (2) is unique. In particular, $A_{even} = \frac 12 (A + UAU)$ and $A_{odd} = \frac 12 (A - UAU)$.
    \end{enumerate}
\end{lem}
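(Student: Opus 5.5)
The whole argument rests on one property of the parity operator from the preliminaries: $U$ is a self-adjoint unitary, so $U^2 = UU^\ast = I$. Part (1) is then a direct computation:
\begin{align*}
U(A + UAU)U = UAU + U^2 A U^2 = UAU + A.
\end{align*}
So $A + UAU$ is even. In the same way, $U(A - UAU)U = UAU - A = -(A - UAU)$, so $A - UAU$ is odd.

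For part (2), I would set $A_{even} := \frac12 (A + UAU)$ and $A_{odd} := \frac12 (A - UAU)$. By (1), and since scalar multiples of even (respectively odd) operators remain even (respectively odd), these have the required parity. Their sum is visibly $A$.

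For part (3), suppose $A = B + C = B' + C'$ with $B, B'$ even and $C, C'$ odd. Then $D := B - B' = C' - C$ is both even and odd. Hence $D = UDU = -D$, so $D = 0$, which gives uniqueness. Alternatively, conjugating $A = A_{even} + A_{odd}$ by $U$ gives $UAU = A_{even} - A_{odd}$. Adding this to $A = A_{even} + A_{odd}$ and subtracting it yields the stated formulas directly, which proves both existence and uniqueness at once.

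There is no real obstacle here. The only point requiring care is to invoke the normalization $U = U^\ast = U^{-1}$, i.e.\ $U^2 = I$. The relation $W_z U = U W_{-z}$ determines $U$ only up to a unimodular constant, and the computation in (1) uses $U^2 = I$, so it relies on having chosen $U$ self-adjoint as in the preliminaries.
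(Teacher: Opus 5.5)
Your proof is correct and essentially the paper's: the paper calls (1) and (2) obvious and proves uniqueness exactly by your alternative argument, conjugating $A = A_{even} + A_{odd}$ by $U$ to get $UAU = A_{even} - A_{odd}$ and then adding and subtracting. Your first uniqueness argument ($D$ both even and odd forces $D = 0$) and your explicit appeal to $U^2 = I$ are fine additions, though the add/subtract step on its own only yields uniqueness and the formulas, so existence still rests on (1).
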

\begin{proof}
    All statements but the uniqueness are entirely obvious. Regarding the uniqueness, note that for any decomposition $A = A_{even} + A_{odd}$ into even and odd operators we also have $UAU = A_{even} - A_{odd}$. Adding both equations yields $A + UAU = 2A_{even}$ and subtracting both equations yields $A - UAU = 2A_{odd}$.
\end{proof}
Note that the parity operator $U$ has spectrum $\sigma(U) = \{ -1, 1\} = \sigma_{ess}\{ -1, 1\}$. Hence, the Hilbert space can be decomposed as $\mathcal H = \mathcal H_{even} \oplus \mathcal H_{odd}$, where $\mathcal H_{even}$ is the eigenspace of $U$ with respect to the eigenvalue $1$ and $\mathcal H_{odd}$ is the eigenspace with respect to the eigenvalue $-1$. More concretely, letting $e_\alpha(z) = \frac{z^\alpha}{\sqrt{2^n \alpha!}}$ denote the standard basis elements, we have $\mathcal H_{even} = \oplus_{|\alpha| \text{even}} \linspan \{e_\alpha\}$ and $\mathcal H_{odd} = \oplus_{|\alpha| \text{odd}} \linspan \{e_\alpha\}$. The elements of $\mathcal H_{even}$ and $\mathcal H_{odd}$ are, respectively, determined by the equations $Uf(z) = f(z)$ and $Uf(z) = -f(z)$, respectively. A straightforward consequence is the following:
\begin{lem}
    Let $A \in \mathcal L(\mathcal H)$. 
    \begin{enumerate}
        \item If $A$ is even, then $A\mathcal H_{even} \subset \mathcal H_{even}$ and $A\mathcal H_{odd} \subset \mathcal H_{odd}$.
        \item If $A$ is odd, then $A\mathcal H_{even} \subset \mathcal H_{odd}$ and $A\mathcal H_{odd} \subset \mathcal H_{even}$. 
    \end{enumerate}
\end{lem}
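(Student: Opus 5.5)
The plan is to express everything through the eigenspace characterization $\mathcal H_{even} = \{f : Uf = f\}$ and $\mathcal H_{odd} = \{f : Uf = -f\}$, which the paper notes just before the statement. Then both parts reduce to checking how $U$ acts on $Af$. The only other input is the definition of even and odd operators, rewritten using $U^2 = I$.

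First I record that $U^2 = I$. This holds because $U$ is unitary and self-adjoint, and in the Fock space it is also immediate from $Uf(z) = f(-z)$. Multiplying $UAU = \pm A$ on the left by $U$ then gives $AU = \pm UA$. So an even operator commutes with $U$ and an odd operator anticommutes with $U$.

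For (1), let $A$ be even and $f \in \mathcal H_{even}$. Then
\begin{align*}
U(Af) = A(Uf) = Af,
\end{align*}
so $Af \in \mathcal H_{even}$. If instead $f \in \mathcal H_{odd}$, the same computation gives $U(Af) = A(Uf) = -Af$, so $Af \in \mathcal H_{odd}$.

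For (2), let $A$ be odd. For $f \in \mathcal H_{even}$ we get $U(Af) = -A(Uf) = -Af$, so $Af \in \mathcal H_{odd}$. For $f \in \mathcal H_{odd}$ we get $U(Af) = -A(Uf) = Af$, so $Af \in \mathcal H_{even}$.

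There is no real obstacle here. The only point needing care is justifying $U^2 = I$, and that is exactly what the choice of $U$ as self-adjoint and unitary provides.
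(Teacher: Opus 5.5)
Your proof is correct and is the argument the paper has in mind. The paper gives no written proof and calls the lemma a straightforward consequence of the description $\mathcal H_{even} = \{f : Uf = f\}$, $\mathcal H_{odd} = \{f : Uf = -f\}$, and your rewriting of $UAU = \pm A$ as $AU = \pm UA$ via $U^2 = I$, followed by computing $U(Af)$, fills in exactly that step.
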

Any bounded linear operator $A \in \mathcal L(\mathcal H)$ can be written as an operator matrix with respect to the decomposition $\mathcal H = \mathcal H_{even} \oplus\mathcal H_{odd}$: 
\begin{align*}
    A \cong \begin{pmatrix}
        A_{11} & A_{12} \\ A_{21} & A_{22}
    \end{pmatrix},
\end{align*}
where $A_{11} \in \mathcal L(\mathcal H_{even})$, $A_{12} \in \mathcal L(\mathcal H_{odd}, \mathcal H_{even}), A_{21} \in \mathcal L(\mathcal H_{even}, \mathcal H_{odd})$ and $A_{22} \in\mathcal L(\mathcal H_{odd})$. 
\begin{lem}
    $A$ is even if and only if $A_{12} = 0$ and $A_{21} = 0$. $A$ is odd if and only if $A_{11} = 0$ and $A_{22} = 0$. 
\end{lem}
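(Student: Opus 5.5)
The natural approach is to write the parity operator itself as an operator matrix with respect to $\mathcal H = \mathcal H_{even} \oplus \mathcal H_{odd}$ and then compute $UAU$ by block multiplication. Since $\mathcal H_{even}$ and $\mathcal H_{odd}$ are the eigenspaces of $U$ for the eigenvalues $1$ and $-1$, we have
\begin{align*}
    U \cong \begin{pmatrix} I & 0 \\ 0 & -I \end{pmatrix},
\end{align*}
where $I$ denotes the identity on the respective subspace. Multiplying out gives
\begin{align*}
    UAU \cong \begin{pmatrix} A_{11} & -A_{12} \\ -A_{21} & A_{22} \end{pmatrix}.
\end{align*}

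Two operators agree if and only if all four of their matrix entries agree, because the decomposition is orthogonal and the entries are obtained by compressing with the orthogonal projections onto the two summands. Hence $UAU = A$ holds if and only if $A_{12} = -A_{12}$ and $A_{21} = -A_{21}$, that is, $A_{12} = 0$ and $A_{21} = 0$. Likewise, $UAU = -A$ holds if and only if $A_{11} = -A_{11}$ and $A_{22} = -A_{22}$, that is, $A_{11} = A_{22} = 0$. This gives both equivalences.

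Alternatively, one direction of each equivalence already follows from the preceding lemma on invariant subspaces: an even operator preserves both summands, so its off-diagonal entries vanish, and an odd operator swaps them, so its diagonal entries vanish. The converse directions then follow from the block computation above.

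There is no real obstacle here. The only points to state explicitly are that $U$ acts as $\pm I$ on the two summands and that the matrix representation is faithful, meaning an operator is zero if and only if all its entries are zero.
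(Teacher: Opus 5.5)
Your argument is correct: writing $U \cong \begin{pmatrix} I & 0 \\ 0 & -I \end{pmatrix}$ and computing $UAU \cong \begin{pmatrix} A_{11} & -A_{12} \\ -A_{21} & A_{22} \end{pmatrix}$ settles both equivalences at once. The paper states this lemma without proof, treating it as immediate from the eigenspace decomposition $\mathcal H = \mathcal H_{even} \oplus \mathcal H_{odd}$ and the preceding lemma, and your block computation is the verification it leaves to the reader.
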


The remainder of this section will be dedicated to a property of the Fredholm index of even and odd operators. Clearly, for arbitrary even/odd operators, no significant statements can be made: For example, the Fredholm index of an even operator can in general be any integer. To see this, simply consider the even operator which is, in its matrix decomposition, given by
\begin{align*}
    \begin{pmatrix}
        I_{\mathcal H_{even}} & 0\\ 0 & A_k
    \end{pmatrix},
\end{align*}
where $A_k$ is any Fredholm operator on $\mathcal H_{odd}$ with Fredholm index $k \in \mathbb Z$. Then, the index of this operator is of course also $k$. 

Therefore, we now restrict to operators in the $C^\ast$-algebra $\mathcal C_1(\mathcal H)$. We recall that this $C^\ast$-algebra agrees with the $C^\ast$-algebra generated by all Toeplitz operators on the Fock space with bounded symbols \cite[Theorem 3.1]{Fulsche2020}.

For a $C^\ast$-subalgebra $\mathcal A$ of $\mathcal L(\mathcal H)$, we define its \emph{essential center} as:
\begin{align*}
    \operatorname{essCen}(\mathcal A) := \{ A \in \mathcal L(\mathcal H): ~[A, B] \in \mathcal K(\mathcal H) \text{ for every } B \in \mathcal A\}
\end{align*}
Since the $C^\ast$-algebra generated by all Toeplitz operators with bounded symbols agrees with $\mathcal C_1(\mathcal H)$, the following result is just a reformulation of \cite[Theorem D]{Berger_Coburn1987}:
\begin{thm}
    $\operatorname{essCen}(\mathcal C_1(\mathcal H)) = \{ T_f: ~f \in \operatorname{VO}_\partial (\mathbb C)\} + \mathcal K(\mathcal H)$
\end{thm}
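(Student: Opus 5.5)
The statement is a reformulation of \cite[Theorem D]{Berger_Coburn1987}, so the plan is to reduce it to that result rather than reprove it from scratch. Berger and Coburn show that the essential commutant of the $C^\ast$-algebra $\mathcal T$ generated by Toeplitz operators $T_f$ with $f \in L^\infty(\mathbb C^n)$ is $\{T_f : f \in \operatorname{VO}_\partial\} + \mathcal K(\mathcal H)$. Here $\operatorname{VO}_\partial$ denotes the bounded functions of vanishing oscillation at infinity, meaning $\sup_{|w-z|\le 1}|f(z)-f(w)| \to 0$ as $|z|\to\infty$.

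\textbf{Step 1: $\operatorname{essCen}(\mathcal A)$ depends only on a generating set of $\mathcal A$.} Suppose $A$ has compact commutator with every element of a set $S$. Since $[A,BC] = [A,B]C + B[A,C]$ and $[A,B^\ast] = -[A^\ast,B]^\ast$, the set of $B$ with $[A,B]\in\mathcal K(\mathcal H)$ is closed under sums and products. It is also norm closed, because $\|[A,B]\|\le 2\|A\|\|B\|$ and $\mathcal K(\mathcal H)$ is closed. Adjoints need a remark, since the identity for $[A,B^\ast]$ involves $A^\ast$ rather than $A$. Here this causes no problem: $S = \{T_f\}$ is self-adjoint, because $T_f^\ast = T_{\overline f}$. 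So the set of such $B$ contains the algebra generated by $S$, hence its norm closure, which is $C^\ast(S)$. It follows that $\operatorname{essCen}(C^\ast(S))$ equals the set of operators with compact commutators with all of $S$.

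\textbf{Step 2: identify the algebra.} By \cite[Theorem 3.1]{Fulsche2020}, $\mathcal C_1(\mathcal H) = C^\ast(\{T_f : f\in L^\infty\})$. Combining this with Step 1 gives $\operatorname{essCen}(\mathcal C_1(\mathcal H)) = \operatorname{essCen}(\mathcal T)$. Berger--Coburn's Theorem D then yields the claimed description.

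\textbf{Main obstacle.} The only delicate point is bookkeeping. Theorem D is stated for $n=1$ in \cite{Berger_Coburn1987}, while the statement writes $\operatorname{VO}_\partial(\mathbb C)$. If higher $n$ is intended, the Berger--Coburn argument carries over verbatim. The inclusion ``$\supseteq$'' rests on the estimate $\|T_fT_g - T_{fg}\|_{ess}\to 0$ for $f\in \operatorname{VO}_\partial$ and $g\in L^\infty$. The inclusion ``$\subseteq$'' rests on a Berezin transform argument showing that an essentially commuting $A$ equals $T_{\widetilde A}$ modulo compacts with $\widetilde A \in \operatorname{VO}_\partial$. I would cite both inclusions rather than redo them, noting only that the dimension plays no role.
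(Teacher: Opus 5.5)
Your proposal is correct and follows essentially the paper's argument: identify $\mathcal C_1(\mathcal H)$ with the $C^\ast$-algebra generated by the Toeplitz operators $T_f$, $f \in L^\infty(\mathbb C^n)$, via \cite[Theorem 3.1]{Fulsche2020}, then read off the essential commutant from Berger--Coburn's Theorem D. Your Step 1, that a self-adjoint set and the $C^\ast$-algebra it generates have the same essential commutant, is correct bookkeeping the paper leaves implicit; the dimension caveat is most likely unnecessary, since Berger and Coburn work on the Segal--Bargmann space over $\mathbb C^n$, with the same normalization the paper uses.
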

In the following, we will make use of one particular Toeplitz operator. The choice of this operator is, to some degree, arbitrary, but it satisfies a collection of properties that we will need. 
\begin{lem} The Toeplitz operator $T_{z/|z|}$ satisfies the following properties:
\begin{enumerate}
    \item It is contained in $\operatorname{essCen}(\mathcal C_1(\mathcal H))$.
    \item It is odd.
    \item Let its matrix representation with respect to the decomposition $\mathcal H = \mathcal H_{even} \oplus \mathcal H_{odd}$ be given by \begin{align*}
    T_{z/|z|} = \begin{pmatrix}
        0 & B_1 \\ B_2 & 0
    \end{pmatrix}.
\end{align*}
Then, $B_2$ is bijective. $B_1$ is injective and Fredholm with index $-1$. In particular, $T_{z/|z|}$ is Fredholm with index $-1$.
\end{enumerate}
\end{lem}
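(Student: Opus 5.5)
Property (1) follows from the preceding theorem (Berger--Coburn) once we check that $f(z)=z/|z|$ lies in $\operatorname{VO}_\partial(\mathbb C^n)$. Here we read $z/|z|$ in the $n=1$ setting, or in general as $z_1/|z|$; the computation is the same. The oscillation of $f$ over a ball $B(z,1)$ is $O(1/|z|)$ as $|z|\to\infty$, because $f$ is smooth away from the origin with gradient of size $O(1/|z|)$. So $\sup_{|w-z|\le 1}|f(z)-f(w)|\to 0$ at infinity, which is exactly vanishing oscillation at the boundary. Property (2) is immediate from the Toeplitz example above: $\beta_-(f)=-f$, so $T_f$ is odd.

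For (3), I will compute $T_f$ on the monomial basis. Since $f(z)=e^{i\theta}$ in polar coordinates (take $n=1$ first) and the Gaussian measure is rotation invariant, orthogonality of $e^{ik\theta}$ gives $\langle T_f e_j,e_m\rangle=\int f\, e_j\overline{e_m}\,d\mu=0$ unless $m=j+1$. Hence $T_f$ is a weighted shift, $T_f e_j=c_j e_{j+1}$, with
\begin{align*}
c_j=\int |z|\,\frac{|z|^{2j}}{\sqrt{2^j j!\,2^{j+1}(j+1)!}}\,d\mu(z),
\end{align*}
which is a ratio of Gamma functions. The key points are that every $c_j>0$, and that $c_j\to 1$ as $j\to\infty$, by Stirling's formula or by the log-convexity and Cauchy--Schwarz estimate $c_j\le 1$ together with a matching lower asymptotic.

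With this, $B_2\colon \mathcal H_{even}\to\mathcal H_{odd}$ sends $e_{2k}\mapsto c_{2k}e_{2k+1}$. It is a bijection between the bases, up to weights bounded above and away from zero, so $B_2$ is bijective. Similarly $B_1\colon\mathcal H_{odd}\to\mathcal H_{even}$ sends $e_{2k+1}\mapsto c_{2k+1}e_{2k+2}$. It is injective with closed range $\overline{\operatorname{span}}\{e_{2k}:k\ge1\}$, whose complement is spanned by $e_0$, so $B_1$ is Fredholm of index $-1$. The index of $T_f$ is then $\ind B_1+\ind B_2=-1+0=-1$.

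The main obstacle is the weight computation: showing $\inf_j c_j>0$, which is what gives closed range and bounded inverses. I would get it by evaluating $c_j=\Gamma(j+\tfrac32)/\sqrt{\Gamma(j+1)\Gamma(j+2)}$ (up to normalization constants, which I will check) and noting that this tends to $1$ and is positive for every $j$. For general $n$, the same argument works with $z_1/|z|$, using multi-indices: $T_f$ maps $e_\alpha$ into the span of $e_{\alpha+\epsilon_1}$ because of the rotation invariance in each variable separately. The cokernel is then spanned by the $e_\alpha$ with $\alpha_1=0$, which is infinite-dimensional. So for $n>1$ the index claim must refer to $n=1$, matching the notation $\operatorname{VO}_\partial(\mathbb C)$, and I would restrict to that case.
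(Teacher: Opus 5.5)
Your proposal is correct and follows the paper's route. Part (1) comes from checking $z/|z|\in\operatorname{VO}_\partial$ against the Berger--Coburn description of the essential center, part (2) from the oddness of the symbol, and part (3) from recognizing $T_{z/|z|}$ as a weighted shift $e_j\mapsto c_je_{j+1}$ with $c_j>0$, $c_j\to 1$ and reading off the blocks $B_1,B_2$.

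The only difference is that you compute the weights yourself, correctly obtaining $c_j=\Gamma(j+\tfrac32)/\sqrt{\Gamma(j+1)\Gamma(j+2)}$, where the paper cites Al-Qabani--Virtanen. Your restriction to $n=1$ matches the single-index basis and $\operatorname{VO}_\partial(\mathbb C)$ that the paper implicitly uses.
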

\begin{proof} 
That $T_{z/|z|}$ is contained in $\operatorname{essCen}(\mathcal C_1(\mathcal H))$ can be verified easily, using the above characterization of the essential center. Since $z \mapsto z/|z|$ is an odd function, the operator is odd. Finally, by \cite[Lemma 18]{Al-Qabani_Virtanen}, $T_{z/|z|}$ is a weighted shift operator, i.e., acts diagonally on the standard orthonormal basis, sending $e_m$ to $\alpha_m e_{m+1}$ with $\alpha_m \neq 0$ and $a_m \to 1$ as $m \to \infty$. In particular, for the matrix entries we have that $B_2: \mathcal H_{even} \to \mathcal H_{odd}$ is bijective and $B_1: \mathcal H_{odd} \to \mathcal H_{even}$ is injective with $\operatorname{Ran}{B_1} = \overline{\Span}\{ e_2, e_4, e_6, \dots\}$. Hence, $B_1$ and $B_2$ are Fredholm with $\ind(B_1) = -1$ and $\ind(B_2) = 0$. 
\end{proof}
\begin{thm}
    Let $A \in \mathcal C_1(\mathcal H)$ be even. If $A$ is Fredholm, then its index is even.
\end{thm}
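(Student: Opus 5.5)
The plan is to use the decomposition $\mathcal H = \mathcal H_{even} \oplus \mathcal H_{odd}$ and the odd operator $T := T_{z/|z|}$ from the preceding lemma to show that the two diagonal blocks of $A$ have the same index.

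First, since $A$ is even, the lemma on matrix representations gives
\begin{align*}
A \cong \begin{pmatrix} A_{11} & 0 \\ 0 & A_{22}\end{pmatrix}.
\end{align*}
Because $A$ is block diagonal, we have $\ker A = \ker A_{11} \oplus \ker A_{22}$ and $\operatorname{Ran} A = \operatorname{Ran} A_{11} \oplus \operatorname{Ran} A_{22}$. A direct sum of two subspaces lying in orthogonal summands is closed if and only if each summand is closed. Hence $A$ being Fredholm forces both $A_{11}$ and $A_{22}$ to be Fredholm, and
\begin{align*}
\ind(A) = \ind(A_{11}) + \ind(A_{22}).
\end{align*}
It therefore suffices to show $\ind(A_{11}) = \ind(A_{22})$.

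Next, by the preceding lemma, $T$ lies in $\operatorname{essCen}(\mathcal C_1(\mathcal H))$. Since $A \in \mathcal C_1(\mathcal H)$, the commutator $TA - AT$ is compact. Writing $T$ in block form with off-diagonal entries $B_1, B_2$, we get
\begin{align*}
TA - AT = \begin{pmatrix} 0 & B_1A_{22} - A_{11}B_1 \\ B_2A_{11} - A_{22}B_2 & 0\end{pmatrix}.
\end{align*}
Compressing to the lower-left block (multiplying by the orthogonal projections onto the summands) shows that $K := B_2A_{11} - A_{22}B_2$ is a compact operator from $\mathcal H_{even}$ to $\mathcal H_{odd}$.

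By the lemma, $B_2 \colon \mathcal H_{even} \to \mathcal H_{odd}$ is a bounded bijection, so by the open mapping theorem it has a bounded inverse. Therefore
\begin{align*}
A_{22} = B_2 A_{11} B_2^{-1} - K B_2^{-1},
\end{align*}
where $KB_2^{-1}$ is compact. By invariance of the index under compact perturbations and its multiplicativity under composition, $\ind(A_{22}) = \ind(B_2) + \ind(A_{11}) + \ind(B_2^{-1}) = \ind(A_{11})$, since $B_2$ and $B_2^{-1}$ are invertible and have index $0$. Consequently $\ind(A) = 2\ind(A_{11})$, which is even.

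I do not expect a serious obstacle. The only points needing care are the splitting of the Fredholm property across the diagonal blocks and the boundedness of $B_2^{-1}$, and both are standard. Only $B_2$ is needed here; the index $-1$ of $B_1$ is presumably reserved for the odd case.
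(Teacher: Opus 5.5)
Your proof is correct and follows essentially the paper's route: both split $A$ into its diagonal blocks and use the odd, essentially central operator $T_{z/|z|}$, whose block $B_2$ is bijective, to show that the two blocks agree modulo compacts up to an index-zero equivalence. The only difference is minor: the paper compares $A_{11}$ with $B_2^\ast A_{22} B_2$ via a conjugation $T_{z/|z|}^\ast A\, T_{z/|z|} = A + K$, which tacitly also uses that $T_{z/|z|}$ is unitary modulo compacts, whereas you read off $B_2A_{11} - A_{22}B_2 \in \mathcal K$ directly from the commutator and invert $B_2$, which is slightly more economical.
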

\begin{proof}
    Since $A$ is even, its matrix representation is of the form
    \begin{align*}
        A = \begin{pmatrix}
            A_1 & 0 \\ 0 & A_2
        \end{pmatrix}
    \end{align*}
    and $A$ is Fredholm if and only if both $A_1$ and $A_2$ are Fredholm. In this case, $\ind(A) = \ind(A_1) + \ind(A_2)$. We will show that $\ind(A_1) = \ind(A_2)$, which proves the result.

    Since $T_{z/|z|} \in \operatorname{essCen}(\mathcal C_1(\mathcal H))$, using also the fact that $T_f^\ast = T_{\overline{f}}$ for every $f \in L^\infty(\mathbb C^n)$, we observe that
    \begin{align*}
        \begin{pmatrix} B_2^\ast A_2 B_2 & 0 \\ 0 & B_1^\ast A_1 B_1 \end{pmatrix} = T_{z/|z|} A  T_{\overline{z}/|z|} = A + K.
    \end{align*}
    In particular, $A_1 - B_2^\ast A_2 B_2 \in \mathcal K(\mathcal H_{even})$ such that
    \begin{align*}
        \ind(A_1) = \ind(B_2^\ast A_2 B_2) = \ind(A_2),
    \end{align*}
    which finishes the proof.
\end{proof}
\begin{cor}
    Let $A \in \mathcal C_1(\mathcal H)$ be odd. If $A$ is Fredholm, then $\ind(A)$ is odd.
\end{cor}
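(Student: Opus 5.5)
The plan is to reduce the odd case to the even case by multiplying with the odd Fredholm operator $T_{z/|z|}$ from the preceding lemma. If $A \in \mathcal C_1(\mathcal H)$ is odd and Fredholm, consider the product $B := T_{z/|z|} A$.

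First, since $\mathcal C_1(\mathcal H)$ is a $C^\ast$-algebra containing all Toeplitz operators with bounded symbols (it agrees with the $C^\ast$-algebra they generate), both $T_{z/|z|}$ and $A$ lie in it, so $B \in \mathcal C_1(\mathcal H)$. Second, $B$ is even: using $U^2 = I$ (as $U$ is a self-adjoint unitary),
\begin{align*}
UBU = (U T_{z/|z|} U)(UAU) = (-T_{z/|z|})(-A) = B.
\end{align*}
Third, $B$ is Fredholm as a product of Fredholm operators, with $\ind(B) = \ind(T_{z/|z|}) + \ind(A) = -1 + \ind(A)$ by the preceding lemma.

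By the theorem for even operators, $\ind(B)$ is even. Hence $\ind(A) = \ind(B) + 1$ is odd. There is no real obstacle here. The only points to check are that $T_{z/|z|}$ indeed belongs to $\mathcal C_1(\mathcal H)$ (its symbol is bounded, and we cite \cite[Theorem 3.1]{Fulsche2020}) and that $U^2 = I$. The latter holds because $U$ is self-adjoint and unitary, and it is also visible directly from $Uf(z) = f(-z)$.
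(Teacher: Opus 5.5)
Your proof is correct and is essentially the paper's argument. The paper multiplies on the right, using that $AT_{z/|z|}$ is even with $\ind(A) = \ind(AT_{z/|z|}) + 1$, while you multiply on the left, which changes nothing. You also explicitly check that the product stays in $\mathcal C_1(\mathcal H)$, which is needed to invoke the theorem on even operators and which the paper leaves implicit.
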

\begin{proof}
    When $A$ is odd, then $AT_{z/|z|}$ is even. Hence, 
    \begin{align*}
        \ind(A) = \ind(AT_{z/|z|}) - \ind(T_{z/|z|}) = \ind(AT_{z/|z|}) + 1.
    \end{align*}
    Since $AT_{z/|z|}$ is even, the result follows from the previous theorem. 
\end{proof}
The approach described here works more generally, in the following setting. For $k \in \mathbb N$ let $\theta$ be a k-th root of unity. Consider the operator $U_\theta f(z) = f(\theta z)$. The following result can then be proven analogously to the previous theorems:
\begin{thm}
    Let $\theta$ be a $k$-th root of unity, $A \in \mathcal C_1(\mathcal H)$ and $m \in \{ 0, 1, \dots, k-1\}$ such that $U_\theta A U_\theta^\ast = \theta^m A$. Then, whenever $A$ is Fredholm, it holds true that:
    \begin{align*}
        \ind(A) \in k\mathbb Z +m.
    \end{align*}
\end{thm}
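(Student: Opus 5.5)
The plan is to run the argument of the even/odd case again, with the two-block decomposition $\mathcal H_{even}\oplus\mathcal H_{odd}$ replaced by the $k$ eigenspaces of $U_\theta$. As in the lemma on $T_{z/|z|}$, I work with the orthonormal basis $e_j$, $j \geq 0$, and take $\theta$ to be a primitive $k$-th root of unity. Since $U_\theta e_j = \theta^j e_j$, we get $\mathcal H = \mathcal H_0 \oplus \dots \oplus \mathcal H_{k-1}$ with $\mathcal H_r = \overline{\Span}\{e_j : j \equiv r \bmod k\}$. If $U_\theta A U_\theta^\ast = \theta^m A$, then for $f\in\mathcal H_r$ we have $U_\theta (Af) = \theta^{m}AU_\theta f = \theta^{m+r}Af$. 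Hence $A\mathcal H_r \subset \mathcal H_{r+m}$, with indices read mod $k$. So $A$ is a block operator with blocks $A_r : \mathcal H_r \to \mathcal H_{r+m}$, and it is Fredholm if and only if every $A_r$ is, with $\ind(A) = \sum_r \ind(A_r)$.

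\textbf{Step 1 (the case $m=0$).} Here $A = \operatorname{diag}(A_0,\dots,A_{k-1})$, and I want $\ind(A_0) = \dots = \ind(A_{k-1})$, which gives $\ind(A)\in k\mathbb Z$. Set $T = T_{z/|z|}$. By the lemma above, $T \in \operatorname{essCen}(\mathcal C_1(\mathcal H))$ and $T$ is a weighted shift $e_j \mapsto \alpha_j e_{j+1}$ with all $\alpha_j \neq 0$. Therefore $T$ maps $\mathcal H_r$ to $\mathcal H_{r+1}$. This block $C_r$ is bijective for $r \neq k-1$. The block $C_{k-1}$ is injective with range of codimension one, because $e_0$ is missed. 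Since $A\in\mathcal C_1(\mathcal H)$ and $T$ lies in the essential center, $T^\ast A T = A\,T^\ast T + K = A + K'$, using that $T^\ast T - I$ is compact because $\alpha_j\to 1$. Comparing the diagonal blocks gives $C_r^\ast A_{r+1} C_r - A_r \in \mathcal K(\mathcal H_r)$. Multiplicativity of the index then yields $\ind(A_r) = \ind(A_{r+1})$ for all $r$, since the contributions of $C_r$ and $C_r^\ast$ cancel.

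\textbf{Step 2 (general $m$).} Compute the weight of $T$: on $e_j$, $U_\theta T U_\theta^\ast e_j = \theta^{-j}\alpha_j\theta^{j+1}e_{j+1}$, so $U_\theta T U_\theta^\ast = \theta T$ and $\ind T=-1$. Weights and indices both behave additively under products. Therefore $A(T^\ast)^m$ has weight $0$, and $\ind(A) = \ind(A(T^\ast)^m) - m$, using $\ind(T^\ast) = 1$. By Step 1 this gives $\ind(A)\equiv -m \pmod k$.

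\textbf{Main obstacle.} The obstacle is the sign convention needed to turn this into the congruence $k\mathbb Z + m$ exactly as stated. The computation above shows that the canonical test operator $T$ has weight $+1$ but index $-1$. For $k \ge 3$ the statement as written therefore holds only if the weight of $A$ is measured with the opposite convention. This means reading $U_\theta f(z) = f(\overline\theta z)$, or equivalently reading the relation as $U_\theta^\ast A U_\theta = \theta^m A$. Under that convention $T^\ast$ has weight $+1$ and index $+1$, and the argument of Step 2 gives precisely $\ind(A) \in k\mathbb Z + m$. So the first thing I would do is fix this convention, matching it with the case $k=2$ (Theorem and Corollary above), where both signs agree. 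Only then would I carry out Steps 1 and 2.
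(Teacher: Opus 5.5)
Your argument is correct and is essentially the paper's sketched proof: you decompose $\mathcal H$ into the eigenspaces of $U_\theta$, use that $T_{z/|z|}$ is an essentially central weighted shift to show that all diagonal blocks have the same index when $m=0$, and then reduce to $m=0$ by multiplying with a power of $T_{z/|z|}$ (the paper uses $AT_{z/|z|}^{k-m}$, you use $A(T_{z/|z|}^\ast)^m$). Your sign concern is also justified: for $k\ge 3$ and primitive $\theta$, taking $A=T_{z/|z|}$ and $m=1$ contradicts the congruence as printed, and the paper's own reduction gives $\ind(A)=\ind(AT_{z/|z|}^{k-m})+(k-m)\in k\mathbb Z-m$, exactly as you found.
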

We spare the reader with the details of the matrix algebra utilized for proving the result, as they are entirely analogous to the case $k = 2$ proven above in detail. Let us just mention that one first proves the case $m = 0$ by considering the matrix decomposition of the operators $T_{z/|z|}^j A T_{\overline{z}/|z|}^j$, $j = 1, \dots, k-1$, which shows that all the diagonal entries of $A$ have the same index whenever $U_\theta A U_\theta^\ast = A$. The general case is then deduced from the special case $m = 0$ by computing the index of the operator $AT_{z/|z|}^{k-m}$.

\section{Operators with strongly continuous modulation action}

In this section, we will again work in the picture $\mathcal H = F^2(\mathbb C^n)$. We recall, e.g.~from the previous section, that the algebra $\mathcal C_1(\mathcal H)$ plays a crucial role in the operator theory arising from quantum harmonic analysis. Of course, one could try to consider variants of this algebra. For example, one could set (our choice of notation will be clearer a little later in this text):
\begin{align*}
    \mathcal C_{-1}(\mathcal H) = \{ A \in \mathcal L(F^2): ~\| W_z A W_z - A\| \to 0, ~z \to 0\}.
\end{align*}
Considering this space is not entirely out of thin air, as the action of the dual phase space is implemented by the following ``modulation'' of operators (cf.~\cite{Berge_Berge_Fulsche}):
\begin{align*}
    \gamma_z(A) = W_{z/2} A W_{z/2}, \quad A \in \mathcal L(\mathcal H), ~z \in \mathbb C^n.
\end{align*}
Hence, $\mathcal C_{-1}(\mathcal H)$ consists precisely of those operators on which the modulation acts strongly continuous. 

A first crucial difference between $\mathcal C_1(\mathcal H)$ and $\mathcal C_{-1}(\mathcal H)$ is the fact that $\mathcal C_{-1}(\mathcal H)$ is no longer an algebra. Instead, we have the following fact, which is straigthforward to prove:
\begin{lem}
    $\mathcal C_{-1}(\mathcal H)$ is a closed two-sided module over $\mathcal C_{1}(\mathcal H)$. 
\end{lem}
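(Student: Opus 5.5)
We have to show three things: $\mathcal C_{-1}(\mathcal H)$ is a closed subspace of $\mathcal L(\mathcal H)$, and it is stable under left and right multiplication by elements of $\mathcal C_1(\mathcal H)$. Linearity is immediate from the triangle inequality. Closedness follows by an $\varepsilon/3$ argument. Let $A_k \to A$ in operator norm with $A_k \in \mathcal C_{-1}(\mathcal H)$. Since $W_z$ is unitary,
\begin{align*}
\| W_z A W_z - A\| \leq 2\|A - A_k\| + \| W_z A_k W_z - A_k\|.
\end{align*}
First fix $k$ so that the first term is small, then let $z \to 0$.

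For the module property, the key algebraic observation is that $W_z B = \alpha_z(B) W_z$ for every $B \in \mathcal L(\mathcal H)$, since $\alpha_z(B) = W_z B W_z^\ast$. Let $A \in \mathcal C_{-1}(\mathcal H)$ and $B \in \mathcal C_1(\mathcal H)$. For the left product we write
\begin{align*}
W_z BA W_z - BA = \alpha_z(B) W_z A W_z - BA = (\alpha_z(B) - B) W_z A W_z + B(W_z A W_z - A).
\end{align*}
The norm of this is at most $\|\alpha_z(B) - B\|\,\|A\| + \|B\|\,\|W_zAW_z - A\|$. The first term tends to $0$ because $B \in \mathcal C_1(\mathcal H)$, and the second because $A \in \mathcal C_{-1}(\mathcal H)$.

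The right product is handled analogously. We use $B W_z = W_z \alpha_{z}^{-1}(B)$, that is $BW_z = W_z W_z^\ast B W_z$, together with the fact that $W_z^\ast = W_{-z}$ up to a unimodular scalar coming from the CCR. Then $W_z^\ast B W_z = \alpha_{-z}(B)$, and
\begin{align*}
W_z AB W_z - AB = (W_z A W_z - A)\alpha_{-z}(B) + A(\alpha_{-z}(B) - B).
\end{align*}
Both terms tend to $0$ as $z \to 0$.

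No step is a genuine obstacle. The only point needing care is checking the phase factor in $W_z^\ast = W_{-z}$ (here it is $1$, since $\sigma(z,-z)=0$ and $W_0 = I$). Even that is irrelevant, because $\alpha_{-z}$ is insensitive to scalar phases.
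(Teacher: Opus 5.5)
Your proof is correct and complete. The paper gives no proof of this lemma, only calling it straightforward, and your direct verification (closedness by the $\varepsilon/3$ estimate, plus the two module inclusions via $W_zB = \alpha_z(B)W_z$ and $BW_z = W_z\alpha_{-z}(B)$) is exactly that routine check. You do not even need $W_z^\ast = W_{-z}$, since $\|W_z^\ast B W_z - B\| = \|B - \alpha_z(B)\|$ by unitary invariance of the norm.
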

Besides having this apparently interesting structure, it is somewhat of a disappointment that the study of properties of $\mathcal C_{-1}(\mathcal H)$ immediately reduces to the study of $\mathcal C_1(\mathcal H)$, by the following very simple fact:
\begin{lem}
    $\mathcal C_{-1}(\mathcal H) = U\mathcal C_1(\mathcal H) = \mathcal C_1(\mathcal H)U$.
\end{lem}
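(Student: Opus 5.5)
The plan is to use the defining relation $W_z U = U W_{-z}$ to turn the modulation $W_z A W_z$ into a shift $\alpha_z$ of $UA$ (respectively $AU$). We take $U$ self-adjoint and unitary, so $U^2 = I$.

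First, I record that $W_{-z} = W_z^\ast$. This follows from the CCR relation, since $W_z W_{-z} = e^{-i\sigma(z,-z)/2} W_0 = I$ because $\sigma(z,-z) = 0$ and $W_0 = I$. Hence the defining relation of the parity operator reads $W_z U = U W_z^\ast$, and taking $U$ to the other side also gives $U W_z = W_z^\ast U$.

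For $A \in \mathcal L(\mathcal H)$, set $B = UA$. Then
\begin{align*}
    W_z B W_z^\ast = W_z U A W_z^\ast = U W_z^\ast A W_z^\ast .
\end{align*}
Multiplying on the left by the unitary $U$ and using that $U$ is unitary, we obtain
\begin{align*}
    \| \alpha_z(B) - B\| = \| W_z^\ast A W_z^\ast - A\| = \| W_{-z} A W_{-z} - A\|.
\end{align*}
Since $z \to 0$ if and only if $-z \to 0$, it follows that $B \in \mathcal C_1(\mathcal H)$ if and only if $A \in \mathcal C_{-1}(\mathcal H)$. Because $A = U B$, this shows $\mathcal C_{-1}(\mathcal H) = U \mathcal C_1(\mathcal H)$.

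The right-sided version goes the same way with $B = AU$:
\begin{align*}
    W_z A U W_z^\ast = W_z A W_z^\ast U ,
\end{align*}
using $U W_z^\ast = W_z U$, which is again the defining relation. Multiplying on the right by $U$ gives $\|\alpha_z(AU) - AU\| = \|W_z A W_z^\ast - A\|$, which looks like the wrong thing. Rewriting directly is cleaner: $W_z A W_z - A = (W_z (AU) U W_z - AU\,U)$, and $U W_z = W_{-z} U = W_z^\ast U$. So $W_z A W_z - A = (\alpha_z(AU) - AU)\,U$, and norms agree. Hence $\mathcal C_{-1}(\mathcal H) = \mathcal C_1(\mathcal H) U$.

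No real obstacle arises. The only care needed is in the bookkeeping of $W_{-z} = W_z^\ast$ and the order of the commutations, and both follow from the CCR relation and the definition of $U$.
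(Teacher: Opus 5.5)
Your proof is correct and is the direct computation the paper has in mind when it calls this a ``very simple fact'' (the paper gives no written proof): the relation $W_zU = UW_z^\ast$ turns the modulation of $A$ into the shift of $UA$ or $AU$, up to a unitary factor. One fix: the first display in your right-sided argument is wrong, since $UW_z^\ast = W_zU$ gives $W_zAUW_z^\ast = W_zAW_zU$ (not $W_zAW_z^\ast U$); this yields $\|\alpha_z(AU)-AU\| = \|W_zAW_z - A\|$ immediately, so you should delete that false start and keep only your corrected computation.
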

At first glance it seems that this would already be the end of any investigations regarding the properties of $\mathcal C_{-1}(\mathcal H)$. But nevertheless, there is at least one non-trivial question regarding $\mathcal C_{-1}(\mathcal H)$, namely: What is its intersection with $\mathcal C_1(\mathcal H)$? Here's the answer:

\begin{thm}\label{thm:intersection}
    $\mathcal C_1(\mathcal H) \cap \mathcal C_{-1}(\mathcal H) = \mathcal K(\mathcal H)$.
\end{thm}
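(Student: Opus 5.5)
The plan is to prove both inclusions, with the inclusion $\mathcal C_1(\mathcal H) \cap \mathcal C_{-1}(\mathcal H) \subset \mathcal K(\mathcal H)$ being the real content. For $\mathcal K(\mathcal H) \subset \mathcal C_1(\mathcal H)\cap\mathcal C_{-1}(\mathcal H)$ we use two standard facts: $z \mapsto W_z$ is strongly continuous, and a compact operator turns strong convergence into norm convergence, i.e. $\|(W_z - I)K\| \to 0$ as $z \to 0$ for $K$ compact. Taking adjoints, and using that $W_z^\ast = W_{-z}$ and $K^\ast$ is compact, also gives $\|K(W_z - I)\| \to 0$. Writing
\begin{align*}
W_z K W_z - K = (W_z - I)KW_z + K(W_z - I)
\end{align*}
(and analogously with $W_z^\ast$ in place of the second $W_z$) shows that $K \in \mathcal C_{-1}(\mathcal H)$ and $K \in \mathcal C_1(\mathcal H)$.

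For the converse, let $A \in \mathcal C_1(\mathcal H) \cap \mathcal C_{-1}(\mathcal H)$. The first step is to combine the two continuity conditions into a one-sided one. By the CCR relation and $\sigma(z,z) = 0$, we have $W_zW_{-2z} = W_{-z} = W_z^\ast$ with no phase factor. Hence
\begin{align*}
\alpha_z(A) = W_z A W_z W_{-2z}.
\end{align*}
Therefore
\begin{align*}
\|A W_{-2z} - A\| \leq \|(A - W_zAW_z)W_{-2z}\| + \|\alpha_z(A) - A\| = \|W_zAW_z - A\| + \|\alpha_z(A) - A\|,
\end{align*}
which tends to $0$ as $z \to 0$. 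So $\|A(W_w - I)\| \to 0$ as $w \to 0$. In words, $A$ is norm-continuous under one-sided multiplication by the Weyl operators.

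The second step is to smooth with an approximate identity. Let $g_\varepsilon \geq 0$ be normalized Gaussians on $\mathbb R^{2n}$ concentrating at $0$ with $\int g_\varepsilon = 1$, and set $G_\varepsilon = \int g_\varepsilon(w) W_w\,dw$ as a strong (Bochner-type) integral. Then
\begin{align*}
\|A G_\varepsilon - A\| \leq \int g_\varepsilon(w)\|A(W_w - I)\|\,dw \to 0 \quad (\varepsilon \to 0),
\end{align*}
by the first step: split the integral into a small ball, where the integrand is uniformly small, and its complement, where $g_\varepsilon$ has small mass and $\|A(W_w-I)\| \leq 2\|A\|$. Since $\mathcal K(\mathcal H)$ is a closed ideal, it then suffices to know that each $G_\varepsilon$ is compact.

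The point I expect to need the most care is justifying the compactness of $G_\varepsilon$. I would invoke the standard fact that for $g \in L^1 \cap L^2(\mathbb R^{2n})$ the operator $\int g(w)W_w\,dw$ is Hilbert--Schmidt. This is the Plancherel theorem for the Weyl transform / Fourier--Wigner transform: the map $g \mapsto \int g(w) W_w\,dw$ is, up to a constant, unitary from $L^2(\mathbb R^{2n})$ onto the Hilbert--Schmidt class. Alternatively, one can check directly on the Fock space that for a Gaussian $g$ the operator $G_\varepsilon$ is diagonal in the basis $e_\alpha$ with eigenvalues tending to $0$. With that in place, $A = \lim_{\varepsilon\to 0} AG_\varepsilon \in \mathcal K(\mathcal H)$, which proves Theorem \ref{thm:intersection}.
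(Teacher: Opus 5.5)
Your proof is correct, and it takes a genuinely different route from the paper's. The paper works on the Fock space. By Xia's theorem, combined with the identification of $\mathcal C_1(\mathcal H)$ with the Toeplitz algebra, elements of $\mathcal C_{-1}(\mathcal H) = U\mathcal C_1(\mathcal H)$ are norm limits of operators $UA$ with $A$ sufficiently localized. For these, $|\widetilde{UA}(z)| = |\langle Ak_z, k_{-z}\rangle| \leq C(1+2|z|)^{-\beta}$, so every operator in $\mathcal C_{-1}(\mathcal H)$ has Berezin transform in $C_0(\mathbb C^n)$. The Bauer--Isralowitz criterion (an operator in the Toeplitz algebra is compact iff its Berezin transform vanishes at infinity) then finishes the argument.

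You instead use the phase-free identity $W_z^\ast = W_zW_{-2z}$ to collapse the two two-sided continuity conditions into the one-sided condition $\|A(W_w - I)\| \to 0$. You then approximate $A$ in norm by $AG_\varepsilon$, where $G_\varepsilon$ is Hilbert--Schmidt by the Plancherel theorem for the Weyl transform. This argument buys several things:
\begin{enumerate}
\item It is more elementary and representation-independent: only the CCR, strong continuity and Plancherel enter, so it runs verbatim on $L^2(\mathbb R^n)$.
\item It avoids three substantial external results.
\item It proves more: the single condition $\|AW_w - A\| \to 0$ already characterizes $\mathcal K(\mathcal H)$.
\end{enumerate}

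The paper's Berezin-transform route is chosen because it is reused for the generalization to $\mathcal C_{\Theta_1}(\mathcal H) \cap \dots \cap \mathcal C_{\Theta_d}(\mathcal H) \cap \mathcal C_1(\mathcal H)$. There, the same localization estimate yields decay of $\widetilde{A}$ in the $V_0$-directions, which feeds into the correspondence theory. Your trick would there only give one-sided norm continuity along $V_0 = \sum_j \operatorname{Ran}(I - \Theta_j)$. Identifying the resulting space with $\mathcal K(F^2(V_0)) \otimes \mathcal C_1(F^2(V_0^\perp))$ would require further work.
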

\begin{proof}
Since $\mathcal K(\mathcal H) \subset \mathcal C_1(\mathcal H)$ and $U\mathcal K(\mathcal H) = \mathcal K(\mathcal H)$, it clearly follows that $\mathcal K(\mathcal H) \subset \mathcal C_1(\mathcal H) \cap \mathcal C_{-1}(\mathcal H)$. 

We therefore need to prove that every operator in the intersection is compact. For doing so, we recall that an operator $A \in \mathcal L(\mathcal H)$ is called \emph{sufficiently localized} provided there exists some $C > 0$ and $\beta > 2n$ such that:
\begin{align*}
    |\langle A k_z, k_w\rangle| \leq \frac{C}{(1 + |z-w|)^{\beta}}.
\end{align*}
By a theorem of Xia \cite{Xia}, combined with \cite[Theorem 3.1]{Fulsche2020}, sufficiently localized operators are dense in $\mathcal C_1(\mathcal H)$. Hence, operators of the form $UA$, with $A$ sufficiently localized, are dense in $\mathcal C_{-1}(\mathcal H)$. Now, for $UA \in \mathcal C_{-1}(\mathcal H)$ with $A$ sufficiently localized, we have for its Berezin transform:
\begin{align*}
    |\widetilde{UA}(z)| = |\langle UA k_z, k_z\rangle| &= |\langle Ak_z, k_{-z}\rangle| \leq \frac{C}{(1 + 2|z|)^\beta},
\end{align*}
in particular $\widetilde{UA} \in C_0(\mathbb C^n)$ (the continuous functions vanishing at infinity). Since the Berezin transform $B \mapsto \widetilde{B}$ maps $\mathcal L(\mathcal H)$ continuously to $C_b(\mathbb C^n)$ (the $C^\ast$-algebra of bounded and continuous functions), we conclude that the Berezin transform of every operator in $\mathcal C_{-1}(\mathcal H)$ is contained in $C_0(\mathbb C^n)$. An application of the compactness characterization of operators on the Fock space (\cite{Bauer_Isralowitz2012}, see also \cite{Fulsche2020}) yields that
\begin{align*}
    \mathcal C_1(\mathcal H) \cap \mathcal C_{-1}(\mathcal H) \subset \mathcal K(\mathcal H),
\end{align*}
which concludes the proof.
\end{proof}
There is a more general version of the above result, which we now want to deduce. 

For any unitary matrix $\Theta \in \mathcal U(\mathbb C^n)$, we consider the operator
\begin{align*}
    U_\Theta f(z) = f(\Theta z), ~z \in \mathbb C^n
\end{align*}
on $F^2(\mathbb C^n)$. Clearly, when $\Theta = \theta Id$, $\theta$ a root of unity, then the operator $U_\Theta$ agrees with the operator $U_\theta$ from the previous section.
\begin{rem}
    We want to note that the operators $U_{\theta I}$, $\theta \in \mathbb C$ with $|\theta| = 1$, appear also naturally in other settings: It is well-known that, by means of the Bargmann transform, they are unitarily equivalent to the \emph{fractional Fourier transforms} on $L^2(\mathbb R^n)$. In particular for $\theta = i$ the operator $U_{\theta I}$ is unitarily equivalent to the Fourier transform on $L^2(\mathbb R^n)$. See, for example, \cite{Zhu2019} for some relations between operators on $F^2(\mathbb C^n)$ and $L^2(\mathbb R^n)$ by means of the Bargmann transform. 
\end{rem}

We can now consider the space
\begin{align*}
    \mathcal C_\Theta(\mathcal H) := \{ A \in \mathcal L(\mathcal H): ~\| W_z A W_{-\Theta z} - A\|\to 0, ~z \to 0\}.
\end{align*}
With this notation, $\mathcal C_{-1}(\mathcal H) = \mathcal C_{-I}(\mathcal H)$ and $\mathcal C_1(\mathcal H) = \mathcal C_I(\mathcal H)$. Then, using the imminent relation $U_\Theta W_z = W_{\Theta z} U_\Theta$, one immediately obtains:
\begin{lem}
    Let $\Theta \in \mathcal U(\mathbb C^n)$. Then, the following holds true:
    \begin{align*}
        \mathcal C_{\Theta}(\mathcal H) = U_{\Theta^\ast} \mathcal C_1(\mathcal H) = \mathcal C_1(\mathcal H)U_{\Theta^\ast}.
    \end{align*}
\end{lem}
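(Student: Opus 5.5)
The plan is to reduce the statement to the definition of $\mathcal C_1(\mathcal H)$ by conjugating with $U_\Theta$, exactly as for the lemma $\mathcal C_{-1}(\mathcal H) = U\mathcal C_1(\mathcal H) = \mathcal C_1(\mathcal H)U$.

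\textbf{Preliminary facts.} First I record three facts.
\begin{enumerate}
\item $U_\Theta$ is unitary on $F^2(\mathbb C^n)$ with $U_\Theta^\ast = U_\Theta^{-1} = U_{\Theta^\ast}$. The measure $\mu$ is invariant under unitary changes of variables, $f\circ\Theta$ is entire whenever $f$ is, and $U_{\Theta^\ast}U_\Theta = I$.
\item The intertwining relation $U_\Theta W_z = W_{\Theta z}U_\Theta$ holds. This follows from $k_z(\Theta w) = k_{\Theta^\ast z}(w)$, since $\Theta w\cdot\overline z = w\cdot\overline{\Theta^\ast z}$ and $|\Theta^\ast z| = |z|$. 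With the obvious relabelling, this gives the stated relation. Equivalently,
\begin{align*}
W_{\Theta z} = U_\Theta W_z U_\Theta^\ast .
\end{align*}
\item $W_{-z} = W_z^\ast$. This holds because $W_zW_{-z} = e^{-i\sigma(z,-z)/2}W_0 = I$.
\end{enumerate}

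\textbf{Right-hand description.} Let $A\in\mathcal L(\mathcal H)$ and put $B := AU_\Theta$. Using fact 2 and fact 3,
\begin{align*}
W_zAW_{-\Theta z} = W_zAU_\Theta W_{-z}U_\Theta^\ast = \bigl(W_zBW_z^\ast\bigr)U_\Theta^\ast .
\end{align*}
Therefore
\begin{align*}
W_zAW_{-\Theta z} - A = \bigl(\alpha_z(B) - B\bigr)U_\Theta^\ast .
\end{align*}
Since $U_\Theta^\ast$ is unitary, the two sides have equal operator norms. Hence $A\in\mathcal C_\Theta(\mathcal H)$ if and only if $B\in\mathcal C_1(\mathcal H)$. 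This is the same as saying $A = BU_{\Theta^\ast}\in\mathcal C_1(\mathcal H)U_{\Theta^\ast}$.

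\textbf{Left-hand description.} Put $C := U_\Theta A$ and $w := \Theta z$. By fact 2, $W_z = U_\Theta^\ast W_w U_\Theta$, so
\begin{align*}
W_zAW_{-\Theta z} = U_\Theta^\ast W_w CW_w^\ast .
\end{align*}
This gives
\begin{align*}
W_zAW_{-\Theta z} - A = U_\Theta^\ast\bigl(\alpha_w(C) - C\bigr),
\end{align*}
again with equal norms. Since $\Theta$ is a linear isometry, $z\to 0$ if and only if $w\to 0$. Hence $A\in\mathcal C_\Theta(\mathcal H)$ if and only if $C\in\mathcal C_1(\mathcal H)$, that is, if and only if $A\in U_{\Theta^\ast}\mathcal C_1(\mathcal H)$.

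\textbf{Main obstacle.} There is no real obstacle. The only point needing care is the bookkeeping in fact 2: one must check whether the intertwining relation carries $\Theta z$ or $\Theta^\ast z$ under the paper's convention $U_\Theta f(z) = f(\Theta z)$. If it turns out to be $\Theta^\ast$, I would replace $\Theta$ by $\Theta^\ast$ throughout. The argument, and the fact that the reparametrization of $z$ is a homeomorphism fixing $0$, remain unchanged.
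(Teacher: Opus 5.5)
\textbf{The gap is fact 2.} Your route is the paper's own: its proof is just the appeal to $U_\Theta W_z = W_{\Theta z}U_\Theta$. Both of your computations are correct \emph{given} fact 2. But fact 2 is false under the convention $U_\Theta f(z) = f(\Theta z)$. The paper states the same wrong relation, so you have inherited its slip. Your own identity $k_z(\Theta w) = k_{\Theta^\ast z}(w)$ actually yields
\begin{align*}
(U_\Theta W_z f)(w) = k_z(\Theta w)\, f(\Theta w - z) = k_{\Theta^\ast z}(w)\, f\bigl(\Theta(w - \Theta^\ast z)\bigr) = (W_{\Theta^\ast z} U_\Theta f)(w).
\end{align*}
That is, $U_\Theta W_z U_\Theta^\ast = W_{\Theta^\ast z}$, or equivalently $W_{\Theta z} = U_\Theta^\ast W_z U_\Theta$. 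No relabelling of $z$ turns this into your $W_{\Theta z} = U_\Theta W_z U_\Theta^\ast$. The two agree only when $\Theta^2 = I$.

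\textbf{The fallback changes the conclusion.} Replacing $\Theta$ by $\Theta^\ast$ throughout does not leave the result unchanged. With the correct relation, your two computations become
\begin{align*}
W_zAW_{-\Theta z} - A &= \bigl(\alpha_z(AU_\Theta^\ast) - AU_\Theta^\ast\bigr)U_\Theta,\\
W_zAW_{-\Theta z} - A &= U_\Theta\bigl(\alpha_{\Theta z}(U_\Theta^\ast A) - U_\Theta^\ast A\bigr).
\end{align*}
So the argument proves $\mathcal C_\Theta(\mathcal H) = U_\Theta\mathcal C_1(\mathcal H) = \mathcal C_1(\mathcal H)U_\Theta$. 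This differs from the displayed statement unless $\Theta^2 = I$, which covers $\Theta = \pm I$, i.e.\ $\mathcal C_{\pm 1}$.

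For a concrete counterexample, take $n = 1$ and $\Theta = i$. Then $U_{\Theta^\ast} = U_{-i}$ lies in $U_{\Theta^\ast}\mathcal C_1(\mathcal H)$. The correct relation gives $U_{-i}W_{-iz} = W_z U_{-i}$, hence $W_z U_{-i} W_{-iz} - U_{-i} = (W_{2z} - I)U_{-i}$. Its norm is $2$ for every $z \neq 0$, because the spectrum of $W_a$ for $a \neq 0$ is the whole unit circle. So $U_{-i} \notin \mathcal C_\Theta(\mathcal H)$.

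\textbf{The fix.} The statement should have $U_\Theta$ in place of $U_{\Theta^\ast}$, or $U_\Theta$ should be defined by $f \mapsto f(\Theta^\ast \cdot)$. The slip does not affect the section's final intersection theorem, which only involves $\ker(\Theta_j - I) = \ker(\Theta_j^\ast - I)$.
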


We aim now at extending Theorem \ref{thm:intersection} to a more general situation, namely, we want to give a characterization of $\mathcal C_{\Theta_1}(\mathcal H) \cap \dots \cap \mathcal C_{\Theta_d}(\mathcal H)$, where $\Theta_1, \dots, \Theta_d \in \mathcal U(\mathbb C^n)$. 

As a first step, we need two preparatory lemmas.
\begin{lem}
    Let $\Phi_1, \dots, \Phi_d \in \mathcal U(\mathbb C^n)$. Then,
    \begin{align*}
        \mathcal C_{\Theta_1}(\mathcal H) \cap \dots \cap \mathcal C_{\Theta_d}(\mathcal H) = \mathcal C_{\Theta_1 \Theta_d^{\ast}}(\mathcal H) \cap \dots \cap \mathcal C_{\Theta_{d-1}\Theta_d^{\ast}}(\mathcal H) \cap \mathcal C_{1}(\mathcal H)U_{\Theta_d^\ast}
    \end{align*}
\end{lem}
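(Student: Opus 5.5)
The right-hand side should be read as $\bigl(\mathcal C_{\Theta_1\Theta_d^\ast}(\mathcal H)\cap\dots\cap\mathcal C_{\Theta_{d-1}\Theta_d^\ast}(\mathcal H)\cap\mathcal C_1(\mathcal H)\bigr)U_{\Theta_d^\ast}$. The plan is to reduce everything to the previous lemma, $\mathcal C_\Theta(\mathcal H)=\mathcal C_1(\mathcal H)U_{\Theta^\ast}$, and then use that right multiplication by a fixed unitary is a bijection of $\mathcal L(\mathcal H)$.

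\textbf{Step 1: composition rule.} First record how the operators $U_\Phi$ compose. Directly from the definition,
\[
U_\Phi U_\Psi f(z)=(U_\Psi f)(\Phi z)=f(\Psi\Phi z),
\]
so $U_\Phi U_\Psi=U_{\Psi\Phi}$. In particular $U_\Phi^\ast=U_\Phi^{-1}=U_{\Phi^\ast}$. Since $\Phi\mapsto U_\Phi$ is an anti-homomorphism, I would double-check the index order when combining the factors below.

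\textbf{Step 2: right multiplication commutes with intersections.} The map $R\colon A\mapsto AU_{\Theta_d}$ is a bijection of $\mathcal L(\mathcal H)$ with inverse $B\mapsto BU_{\Theta_d^\ast}$. Hence it commutes with intersections:
\[
\bigcap_j \mathcal C_{\Theta_j}(\mathcal H)=\Bigl(\bigcap_j \mathcal C_{\Theta_j}(\mathcal H)U_{\Theta_d}\Bigr)U_{\Theta_d^\ast}.
\]

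\textbf{Step 3: compute each factor.} By the previous lemma and Step 1,
\[
\mathcal C_{\Theta_j}(\mathcal H)U_{\Theta_d}=\mathcal C_1(\mathcal H)U_{\Theta_j^\ast}U_{\Theta_d}=\mathcal C_1(\mathcal H)U_{\Theta_d\Theta_j^\ast}.
\]
Since $\Theta_d\Theta_j^\ast=(\Theta_j\Theta_d^\ast)^\ast$, the previous lemma gives $\mathcal C_1(\mathcal H)U_{(\Theta_j\Theta_d^\ast)^\ast}=\mathcal C_{\Theta_j\Theta_d^\ast}(\mathcal H)$. For $j=d$ the factor is simply $\mathcal C_1(\mathcal H)$. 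Substituting these into Step 2 gives exactly the claimed identity.

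\textbf{Main obstacle.} The only delicate point is the bookkeeping of the order in $U_\Phi U_\Psi=U_{\Psi\Phi}$, together with the exact convention behind $\mathcal C_\Theta(\mathcal H)=\mathcal C_1(\mathcal H)U_{\Theta^\ast}$. To make sure the result is $\Theta_j\Theta_d^\ast$ and not $\Theta_d^\ast\Theta_j$, I would verify it directly as well. Write $A=A'U_{\Theta_d^\ast}$ and use the intertwining relation $U_{\Phi}W_w=W_{\Phi w}U_\Phi$ to move $U_{\Theta_d^\ast}$ past $W_{-\Theta_j z}$. This shows $W_zAW_{-\Theta_j z}-A=(W_zA'W_{-\Phi z}-A')U_{\Theta_d^\ast}$ for the appropriate product $\Phi$ of $\Theta_j$ and $\Theta_d^\ast$. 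Because $U_{\Theta_d^\ast}$ is unitary, the norms agree, so $A\in\mathcal C_{\Theta_j}(\mathcal H)$ if and only if $A'\in\mathcal C_\Phi(\mathcal H)$. Any mismatch in the ordering would then only relabel the matrices, not affect the structure of the argument.
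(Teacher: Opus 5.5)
Your Steps 1--3 follow the paper's route exactly. The paper calls the lemma an immediate consequence of $\mathcal C_\Theta(\mathcal H)=\mathcal C_1(\mathcal H)U_{\Theta^\ast}$, and if you take that lemma verbatim, your bookkeeping with $U_\Phi U_\Psi=U_{\Psi\Phi}$ does give $\Theta_j\Theta_d^\ast$. The gap is in your last paragraph. The ordering is not cosmetic, and the direct check you sketch contradicts Step 3 once you carry it out. With $U_\Phi W_w=W_{\Phi w}U_\Phi$ you get $U_{\Theta_d^\ast}W_{-\Theta_j z}=W_{-\Theta_d^\ast\Theta_j z}U_{\Theta_d^\ast}$, hence $\Phi=\Theta_d^\ast\Theta_j$, not $\Theta_j\Theta_d^\ast$. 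The clash arises because that relation, applied twice, makes $U_\Phi U_\Psi$ intertwine $W_w$ with $W_{\Phi\Psi w}$. By contrast, $U_{\Psi\Phi}$ intertwines $W_w$ with $W_{\Psi\Phi w}$. So the relation cannot hold for all unitaries when $U_\Phi f(z)=f(\Phi z)$. Indeed, $k_z(\Theta w)=k_{\Theta^\ast z}(w)$ gives $U_\Theta W_z=W_{\Theta^\ast z}U_\Theta$. So for this $U_\Theta$ the previous lemma really reads $\mathcal C_\Theta(\mathcal H)=\mathcal C_1(\mathcal H)U_\Theta$. The version with $U_{\Theta^\ast}$ holds only when $\Theta^2=I$, because $U_\Gamma\in\mathcal C_1(\mathcal H)$ only for $\Gamma=I$: if $(\Gamma-I)z\neq 0$, then $\|\alpha_z(U_\Gamma)-U_\Gamma\|=2$.

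Nor can the discrepancy be absorbed by relabelling. The sets $\mathcal C_\Theta(\mathcal H)$ are defined without any $U$, and by the same argument $\mathcal C_\Phi(\mathcal H)=\mathcal C_\Psi(\mathcal H)$ forces $\Phi=\Psi$. Here is the convention-free computation. Let $V$ be a unitary with $VW_{\Theta_d u}=W_uV$ for all $u$ (for example $V=U_{\Theta_d}$). Then $W_z(BV)W_{-\Theta_j z}-BV=(W_zBW_{-\Theta_d^\ast\Theta_j z}-B)V$. Together with your Step 2, this proves
\[
\mathcal C_{\Theta_1}(\mathcal H)\cap\dots\cap\mathcal C_{\Theta_d}(\mathcal H)=\Bigl(\mathcal C_{\Theta_d^\ast\Theta_1}(\mathcal H)\cap\dots\cap\mathcal C_{\Theta_d^\ast\Theta_{d-1}}(\mathcal H)\cap\mathcal C_1(\mathcal H)\Bigr)U_{\Theta_d}.
\]
The stated version with $\Theta_j\Theta_d^\ast$ fails as soon as $\Theta_j$ and $\Theta_d$ do not commute. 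For instance, let $n=2$, $\Theta_1=\begin{pmatrix}0&-1\\1&0\end{pmatrix}$ and $\Theta_2=\begin{pmatrix}0&1\\1&0\end{pmatrix}$. Then $\Theta_1\Theta_2^\ast=\operatorname{diag}(-1,1)$ and $\Theta_2^\ast\Theta_1=\operatorname{diag}(1,-1)$. Moreover $U_{\Theta_2^\ast}=S$ is the flip of $F^2(\mathbb C)\otimes F^2(\mathbb C)$, which satisfies $SW_{(u_1,u_2)}S=W_{(u_2,u_1)}$, and $W_{(z_1,z_2)}=W_{z_1}\otimes W_{z_2}$. For a rank-one projection $P$ on $F^2(\mathbb C)$, the operator $B=P\otimes I$ lies in $\mathcal C_{\Theta_1\Theta_2^\ast}(\mathcal H)\cap\mathcal C_1(\mathcal H)$. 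However, $W_{(0,z_2)}(BS)W_{-\Theta_1(0,z_2)}-BS=\bigl(P\otimes(W_{2z_2}-I)\bigr)S$ has norm $2$ for every $z_2\neq 0$. So $BS$ belongs to the stated right-hand side but not to $\mathcal C_{\Theta_1}(\mathcal H)$. The paper's final theorem is unaffected, since it only concerns intersections that contain $\mathcal C_1(\mathcal H)$.
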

The proof of the lemma is an immediate consequence of the lemma before. Note that this reduces the problem of characterization to the case where $\Theta_d = I$.

The second lemma is the following:
\begin{lem}
    Let $\Theta_1, \dots, \Theta_d \in \mathcal U(\mathbb C^n)$. Then, the algebra
    \begin{align*}
        \mathcal C_{\Theta_1}(\mathcal H) \cap \dots \cap \mathcal C_{\Theta_d}(\mathcal H) \cap \mathcal C_1(\mathcal H)
    \end{align*}
    is a closed and $\alpha$-invariant subspace of $\mathcal C_1(\mathcal H)$.
\end{lem}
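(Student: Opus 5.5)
Each $\mathcal C_{\Theta_j}(\mathcal H)$ is norm closed, and so is $\mathcal C_1(\mathcal H)$; I would show closedness of each space directly and then take the intersection, which is a closed subspace. For a fixed unitary $\Theta$, the maps $A \mapsto W_z A W_{-\Theta z} - A$ satisfy $\|W_z A W_{-\Theta z} - A\| \le \|W_z B W_{-\Theta z} - B\| + 2\|A-B\|$, because the $W$'s are unitary. So a standard $\varepsilon/3$ argument shows that norm limits of elements of $\mathcal C_\Theta(\mathcal H)$ stay in $\mathcal C_\Theta(\mathcal H)$. (Alternatively, $\mathcal C_\Theta(\mathcal H) = \mathcal C_1(\mathcal H) U_{\Theta^\ast}$, and right multiplication by a unitary is an isometric bijection; closedness then follows from that of the $C^\ast$-algebra $\mathcal C_1(\mathcal H)$.) Linearity is clear, so the intersection is a closed subspace of $\mathcal C_1(\mathcal H)$.

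For $\alpha$-invariance, I fix $w \in \mathbb R^{2n}$ and $A$ in the intersection. Since $\mathcal C_1(\mathcal H)$ is $\alpha$-invariant ($\alpha_z\alpha_w = \alpha_w\alpha_z$, and $\alpha_w$ is isometric), it suffices to show $\alpha_w(A) \in \mathcal C_\Theta(\mathcal H)$ whenever $A \in \mathcal C_\Theta(\mathcal H)$. I would compute, using the CCR $W_z W_w = e^{-i\sigma(z,w)}W_w W_z$ (derived from the exponentiated CCR),
\begin{align*}
W_z W_w A W_w^\ast W_{-\Theta z} = e^{-i\sigma(z,w)} e^{-i\sigma(-\Theta z, -w)}\, W_w \big(W_z A W_{-\Theta z}\big) W_w^\ast ,
\end{align*}
where in the second factor I commute $W_{-\Theta z}$ past $W_w^\ast = W_{-w}$. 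Hence
\begin{align*}
\|W_z \alpha_w(A) W_{-\Theta z} - \alpha_w(A)\| \le \|W_z A W_{-\Theta z} - A\| + |c(z)-1|\,\|A\|,
\end{align*}
with $c(z)$ the unimodular phase above. Since $\sigma$ is continuous and bilinear, $c(z) \to 1$ as $z \to 0$, and both terms tend to zero.

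The only real obstacle is bookkeeping the phases correctly (signs in the CCR and using $W_w^\ast = W_{-w}$). The statement only needs that the resulting scalar is continuous in $z$ and equals $1$ at $z=0$, which is automatic. I would also note that the intersection is in fact a subspace (not necessarily an algebra, despite the wording), and invariance under $\alpha_w$ for all $w$ finishes the proof.
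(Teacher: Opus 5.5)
Your proposal is correct and follows essentially the paper's argument. Like the paper, you reduce to a single $\Theta$, commute the Weyl operators to pull out a unimodular phase that is continuous in $z$ and equals $1$ at $z=0$, and finish with the triangle inequality; your closedness argument just spells out what the paper calls clear. There are two harmless slips: commuting $W_{-w}W_{-\Theta z}$ gives the phase $e^{-i\sigma(-w,-\Theta z)}$ rather than $e^{-i\sigma(-\Theta z,-w)}$, which does not matter, as you note. Also, the intersection really is an algebra, since $W_zABW_{-\Theta z}-AB=(\alpha_z(A)-A)W_zBW_{-\Theta z}+A(W_zBW_{-\Theta z}-B)$ shows $\mathcal C_1(\mathcal H)\,\mathcal C_\Theta(\mathcal H)\subset\mathcal C_\Theta(\mathcal H)$.
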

\begin{proof}
    It is clear that the intersection is a closed subspace of $\mathcal C_1(\mathcal H)$. The only point, which might not be immediately clear, is the fact that it is $\alpha$-invariant. To show this, we only need to verify the statement for $d = 1$, i.e., that for any unitary $\Theta \in \mathcal U(\mathbb C^n)$ the intersection $\mathcal C_\Theta(\mathcal H) \cap \mathcal C_1(\mathcal H)$ is $\alpha$-invariant (because the intersection of $\alpha$-invariant subspaces is again $\alpha$-invariant).

    Hence, let $A \in \mathcal C_\Theta(\mathcal H) \cap \mathcal C_1(\mathcal H)$. It is clear that for $w \in \mathbb C^n$ we again have $\alpha_w(A) \in \mathcal C_1(\mathcal H)$, hence we need to show that $\alpha_w(A) \in \mathcal C_\Theta(\mathcal H)$. We compute:
    \begin{align*}
        \| W_z \alpha_w(A) W_{-\Theta z} - \alpha_w(A)\| &= \| W_z W_w A W_{-w} W_{-\Theta z} - W_w A W_{-w}\|\\
        &= \| e^{i\sigma(z,w) + i\sigma(-w, -\Theta z)} W_w W_z A W_{-\Theta z} W_{-w} - W_w A W_{-w}\|\\
        &= \| e^{i\sigma(z - \Theta z, w)} W_w W_z A W_{-\Theta z} W_{-w} - W_w A W_{-w}\|\\
        &\leq \| e^{i\sigma(z - \Theta z, w)}\alpha_w(W_z A W_{-\Theta z} - A)\| + \| (1-e^{i\sigma(z - \Theta z, w)}) \alpha_w(A)\|\\
        &= \| W_z A W_{-\Theta z} - A\| + |1-e^{i\sigma(z - \Theta z, w)}| \| A\|.
    \end{align*}
    For $z \to 0$, both of the above terms converge to zero, concluding the proof.
\end{proof}
By the above lemma, the space we want to characterize can be determined by the means of the correspondence theorem (see \cite{Werner1984}), which establishes a one-to-one correspondence between closed, translation-invariant subspaces of $\operatorname{BUC}(\mathbb C^n)$, the bounded and uniformly continuous functions on $\mathbb C^n$, and closed, $\alpha$-invariant subspaces of $\mathcal C_1(\mathcal H)$ (see also \cite{Fulsche2020} for more details). We will recall some details on this later.

The next steps for our program are the following lemmas:
\begin{lem}\label{lemma:function_decay}
    Let $X$ be a finite-dimensional inner product space and for $j = 1, \dots, d$ let $V_j \subset X$ be subspaces and $f_j: X \to [0, \infty)$ be functions satisfying
    \begin{align*}
        \limsup_{|v| \to \infty, v \in V_j} \sup_{w \in V_j^\perp} f_j(v+w) = 0.
    \end{align*}
    Set $f := \inf_{j=1, \dots, d} f_j$ and $V = \sum_{j=1}^d V_j$. Then, $f$ satisfies :
    \begin{align*}
        \limsup_{|v| \to \infty, v \in V} \sup_{w \in V^\perp} f(v+w) = 0.
    \end{align*}
\end{lem}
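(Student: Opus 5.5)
The plan is to reduce everything to a single index $j$: for a point $x = v + w$ with $v \in V$ large and $w \in V^\perp$, we find some $j$ such that the component of $x$ in $V_j$ is large. The hypothesis on $f_j$ then makes $f_j(x)$ small, and $f(x) \le f_j(x)$ because $f$ is the infimum. Write $P_j$ for the orthogonal projection of $X$ onto $V_j$.

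First, a projection identity. Since $V_j \subset V$, we have $V^\perp \subset V_j^\perp$, so $P_j w = 0$ for every $w \in V^\perp$. Hence for $x = v+w$ we get $P_j x = P_j v$. We can then decompose
\begin{align*}
x = P_j v + (x - P_j v), \qquad x - P_j v \in V_j^\perp .
\end{align*}
This is exactly the form $v' + w'$ with $v' \in V_j$ and $w' \in V_j^\perp$ that appears in the hypothesis on $f_j$.

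Second, a norm-equivalence step. On $V$ consider $N(v) := \max_{j=1,\dots,d} |P_j v|$. This is a seminorm, and it is definite: if $P_j v = 0$ for all $j$, then $v \perp V_j$ for all $j$, so $v \perp V$; since also $v \in V$, this forces $v = 0$. As $V$ is finite-dimensional, all norms on it are equivalent, so there is $c > 0$ with $N(v) \ge c|v|$ for all $v \in V$. This is the only genuinely non-formal ingredient, and it is where finite-dimensionality enters.

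Finally, the limit. Fix $\varepsilon > 0$. By hypothesis, for each $j$ there is $R_j$ such that $f_j(u + w') < \varepsilon$ whenever $u \in V_j$ with $|u| > R_j$ and $w' \in V_j^\perp$. Set $R := \max_j R_j / c$. Take $v \in V$ with $|v| > R$ and any $w \in V^\perp$. By the norm-equivalence step, some $j$ satisfies $|P_j v| \ge c|v| > R_j$. By the projection identity, applied with $u = P_j v$ and $w' = x - P_j v$, we get
\begin{align*}
f(v+w) \le f_j(v+w) < \varepsilon .
\end{align*}
Since this holds uniformly in $w \in V^\perp$, the claimed $\limsup$ equals $0$. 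I expect no real obstacle beyond the norm-equivalence step. One should also note the degenerate case $V = \{0\}$, where the statement is vacuous.
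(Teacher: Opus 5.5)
Your proof is correct, and its strategy is the paper's: choose an index $j$ for which $|P_j v|$ is comparable to $|v|$, use $V^\perp \subset V_j^\perp$ to write $v+w = P_j v + \bigl((I-P_j)v + w\bigr)$ with the second summand in $V_j^\perp$, and conclude via $f \le f_j$.

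You differ from the paper only in the quantitative step, and there your version is the one that works. The paper asserts that some $j$ satisfies $|P_j v| \ge |v|/\sqrt{d}$. That holds when the $V_j$ are mutually orthogonal, but fails in general. For example, take $X = \mathbb R^2$, $V_1 = \operatorname{span}\{(1,0)\}$, $V_2 = \operatorname{span}\{(\cos\theta,\sin\theta)\}$ and $v = (-\sin\theta,\cos\theta)$. Then $P_2 v = 0$ and $|P_1 v| = |\sin\theta|\,|v|$, which is below $|v|/\sqrt{2}$ for small $\theta$.

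Your norm-equivalence argument gives $\max_j |P_j v| \ge c|v|$ with a constant $c>0$ that depends on the relative position of the subspaces, which is exactly what the conclusion needs. This matters for the paper's application in the next lemma, where $V_j = E_{\Theta_j}(1)^\perp$ need not be mutually orthogonal.
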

\begin{proof}
    Let $\varepsilon > 0$ and let $R > 0$ such that for each $j = 1, \dots, d$ and each $v \in V_j$ with $|v| > R$ and $w \in V_j^\perp$ it holds true that $f_j(v+w) < \varepsilon.$

    Now, let $v \in V$ with $|v| > \sqrt{d}R$. Denote by $P_j$ the orthogonal projection from $X$ onto $V_j$. Then, for at least one $j$ one necessary have $|P_j v| \geq |v|/ \sqrt{d}$. For notational simplicity, we assume that $|P_1 v| \geq |v|/\sqrt{d}$. In particular, $|P_1 v| \geq R$. Now, let $w \in V^\perp = \cap_{j=1}^d V_j^\perp$. In particular, $(I-P_1)v + w \in V_1^\perp$. Hence,
    \begin{align*}
        f(v+w) \leq f_1(v+w) = f_1(P_1v + ((I-P_1)v + w)) \leq \varepsilon.
    \end{align*}
    Since $\varepsilon > 0$ was arbitrary, this concludes the proof of the lemma.
\end{proof}

\begin{lem}\label{lemma:berezintransform_limit}
    Let $A \in \mathcal C_{\Theta_1}(\mathcal H) \cap \dots \cap \mathcal C_{\Theta_d}(\mathcal H) \cap \mathcal C_1(\mathcal H)$. Write:
    \begin{align*}
        V_0 = (\cap_{j=1}^d E_{\Theta_j}(1))^\perp,
    \end{align*}
    where $E_{\Theta_j}(1)$ denotes the eigenspace of $\Theta_j$ with respect to the eigenvalue $1$. Then,
    \begin{align*}
        \limsup_{|v| \to \infty, v \in V_0} \sup_{w \in V_0^\perp} |\widetilde{A}(v+w)| = 0.
    \end{align*}
\end{lem}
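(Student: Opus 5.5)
The plan is to treat each $\Theta_j$ separately, following the proof of Theorem \ref{thm:intersection}, and then to combine the resulting decay statements with Lemma \ref{lemma:function_decay}. Set $V_j := E_{\Theta_j}(1)^\perp$. Since $V_0 = (\cap_j E_{\Theta_j}(1))^\perp = \sum_j E_{\Theta_j}(1)^\perp = \sum_j V_j$, Lemma \ref{lemma:function_decay} applies with $f_j = f = |\widetilde A|$. Indeed, $\inf_j f_j = |\widetilde A|$. It therefore suffices to show, for each fixed $j$,
\begin{align*}
\limsup_{|v|\to\infty, v\in V_j}\sup_{w\in V_j^\perp} |\widetilde{A}(v+w)| = 0
\end{align*}
for every $A \in \mathcal C_{\Theta_j}(\mathcal H)\cap\mathcal C_1(\mathcal H)$. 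So we reduce to the case $d=1$ and write $\Theta = \Theta_j$.

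For $d=1$, use the lemma $\mathcal C_\Theta(\mathcal H) = U_{\Theta^\ast}\mathcal C_1(\mathcal H)$ to write $A = U_{\Theta^\ast}B$ with $B\in \mathcal C_1(\mathcal H)$. By Xia's theorem combined with \cite[Theorem 3.1]{Fulsche2020}, $B$ is a norm limit of sufficiently localized operators $B_k$. The Berezin transform is norm-continuous with $\|\widetilde{C}\|_\infty \le \|C\|$. The decay property is also preserved under uniform limits. Hence it is enough to treat $A = U_{\Theta^\ast}B$ with $B$ sufficiently localized. Here $U_{\Theta^\ast}^\ast k_z = U_\Theta k_z = k_{\Theta^\ast z}$. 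This follows from $k_z(\Theta w) = e^{\Theta w\cdot \bar z/2 - |z|^2/4}$, $\Theta w\cdot\bar z = w\cdot\overline{\Theta^\ast z}$ and $|\Theta^* z| = |z|$; it should be double-checked, but some unitary image of $z$ appears either way. We then get
\begin{align*}
|\widetilde{A}(z)| = |\langle B k_z, k_{\Theta^\ast z}\rangle| \le \frac{C}{(1+|z-\Theta^\ast z|)^\beta}.
\end{align*}

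Now decompose $z = v+w$ with $v\in V$, $w \in V^\perp$, where $V = E_\Theta(1)^\perp$. Note $E_{\Theta^\ast}(1) = E_\Theta(1)$ for unitary $\Theta$. Then $(I-\Theta^\ast)w = 0$, so $|z-\Theta^\ast z| = |(I-\Theta^\ast)v|$. Since $\Theta^\ast$ is unitary (hence normal), $V$ is invariant under $\Theta^*$, and $I-\Theta^\ast$ restricted to $V$ is injective. On the finite-dimensional space $V$ this gives a constant $c>0$ with $|(I-\Theta^\ast)v|\ge c|v|$. Consequently $\sup_{w\in V^\perp}|\widetilde A(v+w)| \le C(1+c|v|)^{-\beta}\to 0$ as $|v|\to\infty$, uniformly in $w$. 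This is exactly the required property for $f_j$.

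The main obstacle is the approximation step. We must make sure that uniform approximation of $\widetilde A$ preserves the property "$\limsup_{|v|\to\infty}\sup_{w}$ equals zero". This is a routine $\varepsilon/2$ argument. We must also get the formula for $U_{\Theta^*}^*k_z$ right, though any unitary image of $z$ works equally well for the estimate. The only genuinely structural inputs are the density of sufficiently localized operators and the lower bound for $I-\Theta^\ast$ on the orthogonal complement of its kernel.
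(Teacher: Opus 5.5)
Your proposal is correct and follows essentially the same route as the paper. You approximate $A$ by a sufficiently localized operator composed with $U_{\Theta_j^{\ast}}$, use the off-diagonal decay to get $|\widetilde{A}(v+w)|\le C(1+|(I-\Theta_j^{\ast})v|)^{-\beta}$ uniformly in $w\in E_{\Theta_j}(1)$, and combine the $d$ estimates via Lemma \ref{lemma:function_decay}. Your choice $f_j=|\widetilde{A}|$ in that lemma and your explicit bound $|(I-\Theta^{\ast})v|\ge c|v|$ on $E_{\Theta}(1)^{\perp}$ make precise two steps that the paper states only loosely.
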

\begin{proof}
    Let $j = 1, \dots, d$, $\varepsilon > 0$ and let $B \in \mathcal C_1(\mathcal H)$ be sufficiently localized such that $\|A - BU_{\Theta_j^\ast}\| < \varepsilon$.
    Then, for $z = v + w$ with $v \in E_{\Theta_j}(1)^\perp$ and $w \in E_{\Theta_j}(1)$ we have:
    \begin{align*}
        |\widetilde{BU_{\Theta_j^\ast}}(z)| &= |\langle Bk_{\Theta_j^\ast z},k_z\rangle| \leq \frac{C}{(1+|\Theta_j^\ast z - z|)^\beta}\\
        &= \frac{C}{(1+|(\Theta_j^\ast - I) v|^\beta} \to 0, \quad |v| \to \infty.
    \end{align*}
    Note that this is uniform in $w \in E_{\Theta_j}(1)$. Hence, we see that (writing again $z = v+w$ with $w \in E_{\Theta_j}(1)$, $v \in E_{\Theta_j}(1)^\perp$):
    \begin{align*}
        \limsup_{|v|\to \infty} \sup_{w \in E_{\Theta_j}(1)} |\widetilde{A}(v+w)| \leq \limsup_{|v|\to \infty} \sup_{w \in E_{\Theta_j}(1)} |\widetilde{BU_{\Theta_j^\ast}}(v+w)| + \varepsilon = \varepsilon.
    \end{align*}
    Since $\varepsilon > 0$ was arbitrary, we obtain:
    \begin{align*}
        \limsup_{|v|\to \infty} \sup_{w \in E_{\Theta_j}(1)} |\widetilde{A}(v+w)|= 0.
    \end{align*}
    Letting now $f_j(z) = \sup_{w \in V_0^\perp} |\widetilde{A}(P_j(z))|$, where $P_j$ is the orthogonal projection onto $E_{\Theta_j}(1)^\perp$, we see that with $V_j = E_{\Theta_j}(1)^\perp$ the assumptions of Lemma \ref{lemma:function_decay}, i.e.,
    \begin{align*}
        \limsup_{|v|\to \infty, v \in V_j} \sup_{w \in V_j^\perp} f_j(v+w) = 0
    \end{align*}
    are satisfied. Hence, Lemma \ref{lemma:function_decay} (combined with the identity $\sum_{j=1}^d E_{\Theta_j}(1)^\perp = \left( \cap_{j=1}^d E_{\Theta_j}(1)\right)^\perp$) yield the claim that we wanted to prove.
\end{proof}
\begin{lem}\label{lem:correspondence1}
    Let $A \in \mathcal C_{\Theta_1}(\mathcal H) \cap \dots \cap \mathcal C_{\Theta_d}(\mathcal H) \cap \mathcal C_1(\mathcal H)$ and let $V_0$ as in the previous lemma. Then, $\widetilde{A} \in C_0(V_0) \otimes \operatorname{BUC}(V_0^\perp)$. 
\end{lem}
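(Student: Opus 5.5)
We need to show that $\widetilde{A}$ lies in $C_0(V_0)\otimes \operatorname{BUC}(V_0^\perp)$. We read this space as the closure, in $\operatorname{BUC}(\mathbb C^n)$ with sup norm, of the span of products $g(v)h(w)$ for $z = v+w$ with $v\in V_0$, $w\in V_0^\perp$. Equivalently, it is the space of bounded uniformly continuous $F$ on $\mathbb C^n \cong V_0\oplus V_0^\perp$ satisfying
\begin{align*}
\lim_{|v|\to\infty,\, v\in V_0}\ \sup_{w\in V_0^\perp}|F(v+w)| = 0.
\end{align*}
The plan has two steps. First, check that $\widetilde A$ is in $\operatorname{BUC}(\mathbb C^n)$ and has this decay. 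Second, prove that every $\operatorname{BUC}$ function with this uniform decay lies in the tensor product closure.

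For the first step, $A\in\mathcal C_1(\mathcal H)$, and the Berezin transform intertwines $\alpha_z$ with translations: $\widetilde{\alpha_z(A)}(w)=\widetilde A(w-z)$. It is also contractive into $C_b$. Hence
\begin{align*}
\|\widetilde A(\cdot - z)-\widetilde A\|_\infty\le \|\alpha_z(A)-A\|\to 0,
\end{align*}
so $\widetilde A\in\operatorname{BUC}(\mathbb C^n)$. The uniform decay in the $V_0$ direction is exactly Lemma \ref{lemma:berezintransform_limit}.

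For the second step, take $F\in \operatorname{BUC}(\mathbb C^n)$ with the decay property and fix $\varepsilon>0$.
\begin{enumerate}
\item Choose a cutoff $\chi\in C_c(V_0)$ with $0\le\chi\le1$ and $\chi=1$ on a large ball $\{|v|\le R\}$, where $R$ is taken so that $\sup_{w}|F(v+w)|<\varepsilon$ whenever $|v|>R$. Then $\|F-\chi F\|_\infty\le\varepsilon$, and $\chi F$ is still $\operatorname{BUC}$.
\item Approximate $\chi F$ on the compact set $K=\operatorname{supp}\chi$ uniformly in $w$. By uniform continuity there is $\delta>0$ such that $|F(v+w)-F(v'+w)|<\varepsilon$ for $|v-v'|<\delta$ and all $w$.
\item Take a finite partition of unity $\{\psi_k\}\subset C_c(V_0)$ subordinate to a cover of $K$ by $\delta$-balls centred at points $v_k$. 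Set
\begin{align*}
G(v+w)=\sum_k \chi(v)\psi_k(v)\,F(v_k+w).
\end{align*}
\item Each term is of the form $g_k(v)h_k(w)$ with $g_k=\chi\psi_k\in C_c(V_0)\subset C_0(V_0)$ and $h_k=F(v_k+\cdot)\in\operatorname{BUC}(V_0^\perp)$, so $G$ is an elementary tensor sum.
\item By the choice of $\delta$, $\|\chi F-G\|_\infty\le\varepsilon$.
\end{enumerate}
Hence $\|F-G\|_\infty\le 2\varepsilon$, and $F$ lies in the closure.

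The main obstacle is not analytic but definitional. One has to pin down which completion of $C_0(V_0)\otimes\operatorname{BUC}(V_0^\perp)$ is meant, and match it with the uniform-decay characterization. The partition-of-unity argument above handles the inclusion that is needed. The converse inclusion is immediate, since each elementary tensor has the decay property and the decay is preserved under sup-norm limits. The converse is not required here, but it is useful later for the correspondence theorem.
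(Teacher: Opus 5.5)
Your proof is correct, but it takes a different route from the paper.

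\textbf{The paper's route.} The paper identifies $(C_0(V_0)\oplus\mathbb C1)\otimes\operatorname{BUC}(V_0^\perp)$ with $C(\alpha V_0\times\mathcal M(\operatorname{BUC}(V_0^\perp)))$ via Gelfand theory. It then uses a net argument in two cases:
\begin{itemize}
\item $|P_0z_\gamma|\to\infty$, handled by Lemma \ref{lemma:berezintransform_limit};
\item $P_0z_\gamma\to z_0$.
\end{itemize}
Together with Bourbaki's extension theorem, this shows that $\widetilde A$ extends continuously to the compactification. Membership in $C_0(V_0)\otimes\operatorname{BUC}(V_0^\perp)$ is left implicit: it comes from the extension vanishing on $\{\infty\}\times\mathcal M(\operatorname{BUC}(V_0^\perp))$.

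\textbf{Your route.} You describe the tensor product intrinsically as the $\operatorname{BUC}$ functions that decay uniformly in the $V_0$-direction. You then approximate $\widetilde A$ directly in sup norm by finite sums $\sum_k g_k\otimes h_k$, using a cutoff and a partition of unity.

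\textbf{Comparison.} Both arguments rest on the same two inputs: the decay from Lemma \ref{lemma:berezintransform_limit}, and the uniform continuity of $\widetilde A$. The paper uses the second tacitly, since choosing $\gamma_0$ in its second case needs continuity in $v$ uniformly in $w\in V_0^\perp$. You derive it explicitly from $\widetilde{\alpha_z(A)}=\widetilde A(\cdot-z)$.

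Your argument is more elementary. It avoids maximal ideal spaces and the extension theorem. It also avoids the unstated step that a continuous function on $\alpha V_0\times\mathcal M$ vanishing over $\infty$ lies in the ideal. The paper's argument is more conceptual and fits the compactification viewpoint of correspondence theory.

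\textbf{Justifying your reading of the tensor product.} The step you call a ``reading'' is easy to justify. The unique $C^\ast$-norm on $C_0(V_0)\odot C(\mathcal M(\operatorname{BUC}(V_0^\perp)))$ is the sup norm over $V_0\times\mathcal M(\operatorname{BUC}(V_0^\perp))$. This equals the sup norm over $V_0\times V_0^\perp$, because $V_0^\perp$ is dense in the maximal ideal space. So the $C^\ast$-tensor product is exactly your sup-norm closure inside $\operatorname{BUC}(\mathbb C^n)$.
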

\begin{proof}
    We first note that, since commutative $C^\ast$-algebras are nuclear, we do not need to worry about the choice of tensor product. Further, it is well-known that for two compact Hausdorff spaces $X$ and $Y$ it holds true that $C(X) \otimes C(Y) = C(X \times Y)$. In particular, writing $\alpha V_0$ for the one-point compactification of $V_0$ and $\mathcal M(\operatorname{BUC}(V_0^\perp))$ for the maximal ideal space of $\operatorname{BUC}(V_0^\perp)$ (which is a compactification of $V_0^\perp$), we see that 
    $$ (C_0(V_0) \oplus \mathbb C1) \otimes \operatorname{BUC}(V_0^\perp) = C(\alpha V_0 \times \mathcal M(\operatorname{BUC}(V_0^\perp))).$$ 
    If we can show that $\widetilde{A}$ extends to a continuous function on $\alpha V_0 \times \mathcal M(\operatorname{BUC}(V_0^\perp))$, then we have shown that $\widetilde{A} \in C_0(V_0) \otimes \operatorname{BUC}(V_0^\perp)$. Write $P_0$ for the orthogonal projection onto $V_0$. We let $(z_\gamma) \subset \mathbb C^n$ be a net in $\mathbb C^n$ which converges in the compactification. Then, we can decompose $z_\gamma$ as $z_\gamma = P_0 z_\gamma + (I-P_0) z_\gamma \in V_0 \oplus V_0^\perp$. Recall that $\alpha V_0 \times \mathcal M(\operatorname{BUC}(V_0^\perp))$ is a compactification of $\mathbb C^n$. Then, there are two possibilities: Either, we have $|P_0 z_\gamma| \to \infty$ or $|P_0 z_\gamma|$ remains bounded. In the first case, Lemma \ref{lemma:berezintransform_limit} shows that $\widetilde{A}(z_\gamma) = \widetilde{A}(P_0 z_\gamma + (I-P_0)z_\gamma) \to 0$. Hence, when $(P_0z_\gamma, (I-P_0)z_\gamma) \to (\infty, x)$, where $\infty$ denotes the point at infinity from $\alpha V_0$ and $x \in \mathcal M(\operatorname{BUC}(V_0^\perp))$, then $\widetilde{A}(z_\gamma) \to 0$ independently of the precise choice of the net. 

    In the second case, we necessarily have $P_0 z_\gamma \to z_0$ for some $z_0 \in V_0$. Let $\varepsilon > 0$ and $\gamma_0$ be such that for $\gamma \geq \gamma_0$ we have 
    \begin{align*}
        |\widetilde{A}(P_0 z_\gamma + (I-P_0)z_\gamma) - \widetilde{A}(z_0 + (I - P_0)z_\gamma)| \leq \varepsilon.
    \end{align*}
    Denote the limit of $(I - P_0)z_\gamma$ in $\mathcal M(\operatorname{BUC}(V_0^\perp))$ by $y$. Then, since the function $V_0^\perp \ni w \mapsto \widetilde{A}(z_0 + w)$ is contained in $\operatorname{BUC}(V_0^\perp)$, there exists $\gamma_1$ such that for $\gamma \geq \gamma_1$ it is:
    \begin{align*}
        |\widetilde{A}(z_0, (I-P_0)z_\gamma) - \widetilde{A}(z_0, y)| \leq \varepsilon.
    \end{align*}
    Since there exists $\gamma_2$ such that $\gamma_2 \geq \gamma_0$ and $\gamma_2 \geq \gamma_1$, we see that for $\gamma \geq \gamma_1$ the following holds true:
    \begin{align*}
        |\widetilde{A}(z_\gamma) - \widetilde{A}(z_0, y)| \leq 2 \varepsilon.
    \end{align*}
    In particular, $\widetilde{A}(z_\gamma) \to \widetilde{A}(z_0, y)$. Since this convergence depends only on the limit points of $P_0z_\gamma$ and $(I-P_0)z_\gamma$, and not on the precise net, Bourbaki's extension theorem (\cite[Page 81, Theorem I]{Bourbaki1987}) shows that $\widetilde{A}$ extends to a continuous function on $\alpha V_0 \times \mathcal M(\operatorname{BUC}(V_0^\perp))$, which finishes the proof.
\end{proof}
\begin{lem}\label{lem:correspondence2}
    Let $\Theta_1,\dots, \Theta_d \in\mathcal U(\mathbb C^n)$ and $V_0$ as above. Let $f \in C_0(V_0) \otimes \operatorname{BUC}(V_0^\perp)$. Then, $T_f \in  \mathcal C_{\Theta_1}(\mathcal H) \cap \dots \cap \mathcal C_{\Theta_d}(\mathcal H) \cap \mathcal C_1(\mathcal H)$.
\end{lem}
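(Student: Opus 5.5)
By linearity, density and closedness, it suffices to treat a dense set of symbols. The space $\mathcal C_{\Theta_1}(\mathcal H) \cap \dots \cap \mathcal C_{\Theta_d}(\mathcal H) \cap \mathcal C_1(\mathcal H)$ is a closed subspace, and $\|T_f\| \leq \|f\|_\infty$. So it is enough to prove the claim for elementary tensors $f(v+w) = g(v)h(w)$ with $g \in C_c(V_0)$ and $h \in \operatorname{BUC}(V_0^\perp)$. Their linear span is dense in $C_0(V_0)\otimes \operatorname{BUC}(V_0^\perp)$, viewed as a subalgebra of $\operatorname{BUC}(\mathbb C^n)$.

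Membership in $\mathcal C_1(\mathcal H)$ is standard, since all Toeplitz operators with bounded symbols lie in $\mathcal C_1(\mathcal H)$. Fix $j$. By the lemma giving $\mathcal C_{\Theta_j}(\mathcal H) = \mathcal C_1(\mathcal H)U_{\Theta_j^\ast}$, it suffices to show $T_f U_{\Theta_j} \in \mathcal C_1(\mathcal H)$. Equivalently, by the definition of $\mathcal C_{\Theta_j}(\mathcal H)$, we need
$$\| W_z T_f W_{-\Theta_j z} - T_f\| \to 0 \quad (z \to 0).$$
First I use $W_z T_f W_{-z} = T_{f(\cdot - z)}$. Then the CCR give
$$W_z T_f W_{-\Theta_j z} = e^{i c(z)}\, T_{f(\cdot - z)}\, W_{z - \Theta_j z},$$
with a phase $c(z) \to 0$. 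Since $T_{f(\cdot-z)} \to T_f$ in norm (as $f$ is uniformly continuous), everything reduces to showing
$$\| T_f (W_u - I)\| \to 0 \quad \text{for } u = (I-\Theta_j)z \to 0 .$$

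The key observation is the direction of $u$. Since $\Theta_j$ is unitary, hence normal, $\operatorname{Ran}(I-\Theta_j) = E_{\Theta_j}(1)^\perp \subset V_0$. So $u \in V_0$, which is exactly the direction in which $f$ decays because of the compactly supported factor $g$. Choose $\chi \in C_c(V_0)$ with $\chi = 1$ on a neighbourhood of $\operatorname{supp} g$. Then $f = f\cdot(\chi\otimes 1)$, and I would like to write $T_f \approx T_f T_{\chi\otimes 1}$ modulo an operator that is small, or at least controllable.

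The main obstacle is that $T_fT_{\chi\otimes1} \neq T_f$ exactly. To get around this, I would instead bound $T_f(W_u - I)$ directly via localization in the $V_0$ variable. The Fock space factorizes as $F^2(V_0)\otimes F^2(V_0^\perp)$ (with $V_0$ a complex subspace, being a sum of $\Theta_j$-invariant complex subspaces). Along this factorization, $T_{g\otimes h} = T_g\otimes T_h$ and $W_u = W_u^{(V_0)}\otimes I$ for $u\in V_0$. Hence
$$T_f(W_u - I) = T_g(W^{(V_0)}_u - I)\otimes T_h .$$
Here $T_g$ is compact on $F^2(V_0)$, because $g$ has compact support. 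Since $W_u^{(V_0)} \to I$ strongly and compact operators convert strong convergence into norm convergence on the right (via adjoints: $\|K(W_u - I)\| = \|(W_u^\ast - I)K^\ast\| \to 0$), we get $\|T_g(W_u^{(V_0)}-I)\|\to 0$. This gives the required norm convergence, so $T_f \in \mathcal C_{\Theta_j}(\mathcal H)$ for every $j$.

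The remaining checks are routine: the complex-linearity of $V_0$, which is needed for the tensor factorization, and the density claim for elementary tensors.
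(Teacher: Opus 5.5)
Your proof is correct, but you handle the twisted action differently from the paper.

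\textbf{The paper's route.} It reduces further to $K\otimes B$, with $K$ of rank one on $F^2(V_0)$ and $B\in\mathcal C_1(F^2(V_0^\perp))$. It splits $z=(z_1,z_2)\in V_0\times V_0^\perp$. Since $\Theta_j$ leaves $V_0$ invariant and is the identity on $V_0^\perp$, both $W_z$ and $W_{-\Theta_j z}$ factor as tensor products. The paper then estimates by $\|W_{z_1}KW_{-\Theta_j z_1}-K\|\,\|B\|+\|K\|\,\|W_{z_2}BW_{-z_2}-B\|$. The first term tends to zero by strong continuity of the Weyl operators on the two vectors defining $K$.

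\textbf{Your route.} You use the CCR once, $W_zAW_{-\Theta_j z}=e^{ic(z)}\alpha_z(A)W_{(I-\Theta_j)z}$. This shows that for $A\in\mathcal C_1(\mathcal H)$, the defining condition of $\mathcal C_{\Theta_j}(\mathcal H)$ is equivalent to a one-sided condition: $\|A(W_u-I)\|\to 0$ as $u\to 0$ in $\operatorname{Ran}(I-\Theta_j)=E_{\Theta_j}(1)^\perp\subseteq V_0$. The geometric input is the same as in the paper, since your inclusion $\operatorname{Ran}(I-\Theta_j)\subseteq V_0$ is just $\Theta_j|_{V_0^\perp}=I$ in another guise. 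The difference is that only $W_u$ with $u\in V_0$ has to be factored. Compactness of $T_g$ then turns strong convergence into norm convergence directly, with no rank-one reduction.

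\textbf{What each approach buys.} Your version gives a cleaner criterion and makes transparent why compactness in the $V_0$-directions is exactly what is needed. The paper's version is phrased at the level of the operator space $\mathcal K(F^2(V_0))\otimes\mathcal C_1(F^2(V_0^\perp))$, which is the object in the final theorem. Your computation also works verbatim for $K\otimes B$ with $K$ compact, so you lose nothing there.

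The preliminary reformulation via $\mathcal C_{\Theta_j}(\mathcal H)=\mathcal C_1(\mathcal H)U_{\Theta_j^\ast}$ is never used, since you work from the definition, and can be dropped.
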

\begin{proof}
    It clearly suffices to prove this for $f$ being an elementary tensor, $f = g \otimes h$ with $g \in C_0(V_0)$ and $h \in \operatorname{BUC}(V_0^\perp)$. Note that one can, analogously to $F^2(\mathbb C^n)$, define the Fock spaces over the complex vector spaces $V_0$ and $V_0^\perp$, which we denote by $F^2(V_0)$ and $F^2(V_0^\perp)$. On these spaces, one can in entirely analogous ways define the Toeplitz operators $T_g \in \mathcal L(F^2(V_0))$ and $T_h \in \mathcal L(F^2(V_0^\perp))$. Since $g \in C_0(V_0)$ it holds true that $T_g \in \mathcal K(F^2(V_0))$ and further, by \cite[Theorem 3.1]{Fulsche2020} (which identified $\mathcal C_1$ with the Toeplitz algebra) it is $T_h \in \mathcal C_1(F^2(V_0^\perp)$. Since Toeplitz operators over Fock spaces with tensor product symbols are the tensor products of the Toeplitz operators, we see that $T_f = T_g \otimes T_h \in \mathcal K(F^2(V_0)) \otimes \mathcal C_1(F^2(V_0^\perp))$. 

    Based on this observation, it suffices to prove that $K \otimes B \in \mathcal C_{\Theta_j}(\mathcal H) \cap \mathcal C_1(\mathcal H)$ whenever $K \in \mathcal K(F^2(V_0))$ is of rank one, i.e., $K = \varphi \otimes \psi$, and $B \in \mathcal C_1(F^2(V_0^\perp))$. Under these conditions, it is clear that $K \otimes B \in \mathcal C_1(\mathcal H)$ so that we only need to verify that $K \otimes B \in \mathcal C_{\Theta_j}(\mathcal H)$. 
    
    We decompose $z \in \mathbb C^n$ as $z = (z_1, z_2) \in V_0 \times V_0^\perp$. Observe that $\Theta_j|_{V_0^\perp} = I$. In particular, $\Theta_j$ leaves both $V_0$ and $V_0^\perp$ invariant. For simplicity of notation, we will also write $\Theta_j z_1$ instead of $\Theta_j|_{V_0} z_1$ in the following. We now compute:
     \begin{align*}
        \| W_z K &\otimes B W_{-\Theta_j z} - K \otimes B\|\\
        &= \| W_{(z_1,0)} W_{(0, z_2)} K \otimes B W_{-\Theta_j(0, z_2)} W_{-\Theta_j(z_1,0)} - K \otimes B\|\\
        &\leq \| [W_{z_1} K W_{-\Theta_j z_1}] \otimes [W_{z_2} B W_{-z_2}] - K \otimes B\|\\
        &\leq \| [W_{z_1} K W_{-\Theta_j z_1}] - K\| \| B\| + \| K\| \| [W_{z_2} B W_{-z_2}] - B\|.
    \end{align*}
    We now clearly have $\| [W_{z_2} B W_{-z_2}] - B\| \to 0$ as $z_2 \to 0$. Since we assumed that K is of rank one, $K = \varphi \otimes \psi$, we can also continue to estimate the other term:
    \begin{align*}
        \| [W_{z_1} K W_{-\Theta_j z_1}] - K\| &= \| [W_{z_1} \varphi] \otimes [W_{\Theta_j z_1} \psi] - \varphi \otimes \psi\|\\
        &\leq \| W_{z_1} \varphi - \varphi\| \| \psi\| + \| W_{\Theta_j z_1} \psi - \psi\| \| \varphi\| \to 0, \quad z_1 \to 0.
    \end{align*}
    This finishes the proof.    
\end{proof}
As the final step, we now recall some facts from the correspondence theory. It was proven in \cite{Werner1984} that there is a one-to-one correspondence between translation-invariant closed subspaces $\mathcal D_0$ of $\operatorname{BUC}(\mathbb C^n)$ and $\alpha$-invariant closed subspaces $\mathcal D_1$ of $\mathcal C_1(\mathcal H)$. The discussion in \cite{Fulsche2020} on correspondence theory related the correspondence of spaces with Toeplitz quantization and the Berezin transform: $\mathcal D_0$ and $\mathcal D_1$ are corresponding spaces if and only if $T_f \in \mathcal D_1$ for each $f \in \mathcal D_0$ and $\widetilde{A} \in \mathcal D_0$ for each $A \in \mathcal D_1$. 
Now, since both the Toeplitz operators and the Berezin transform respect tensor products, we have the following result:
\begin{lem}
    Let $V_0 \subset \mathbb C$ be a complex subspace. Further, let $\mathcal D_0 \subset \operatorname{BUC}(V_0)$ and $\mathcal D_1 \subset \mathcal C_1(F^2(V_0))$ be a pair of corresponding spaces and $\mathcal E_0 \subset \operatorname{BUC}(V_0^\perp)$ and $\mathcal E_1 \subset \mathcal C_1(F^2(V_0^\perp))$ be a pair of corresponding spaces. Then, $\mathcal D_0 \otimes \mathcal E_0 \subset \operatorname{BUC}(\mathbb C^n)$ and $\mathcal D_1 \otimes \mathcal E_1 \subset \mathcal C_1(F ^2(\mathbb C^n))$ are corresponding spaces.
\end{lem}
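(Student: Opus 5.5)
The plan is to use the characterization of corresponding spaces recalled just before the statement. Two closed subspaces $\mathcal D_0$ and $\mathcal D_1$ correspond if and only if three things hold: $\mathcal D_0$ is translation-invariant, $\mathcal D_1$ is $\alpha$-invariant, and $T_f \in \mathcal D_1$ for $f \in \mathcal D_0$ while $\widetilde{A} \in \mathcal D_0$ for $A \in \mathcal D_1$. We take $\mathcal D_0 \otimes \mathcal E_0$ to mean the closure in $\operatorname{BUC}(\mathbb C^n)$ of the algebraic tensor product, with $f \otimes g$ identified with the function $(z_1, z_2) \mapsto f(z_1) g(z_2)$ on $V_0 \times V_0^\perp \cong \mathbb C^n$. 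Similarly, $\mathcal D_1 \otimes \mathcal E_1$ is the norm closure of the span of $A \otimes B$ acting on $F^2(V_0) \otimes F^2(V_0^\perp) \cong F^2(\mathbb C^n)$. This last identification is unitary: the Gaussian measure factorizes and entire functions in $(z_1, z_2)$ decompose accordingly. Both spaces are closed by construction.

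\textbf{Step 1 (invariance and inclusion in $\mathcal C_1$).} Under the tensor identification, the Weyl operators factor as $W_{(z_1, z_2)} = W_{z_1} \otimes W_{z_2}$. The phases split because $\sigma$ is additive over the orthogonal complex decomposition: $\operatorname{Im}(z \cdot \overline{w})$ splits into its $V_0$ and $V_0^\perp$ parts. Hence $\alpha_{(z_1, z_2)}(A \otimes B) = \alpha_{z_1}(A) \otimes \alpha_{z_2}(B)$, which shows $\alpha$-invariance of the algebraic tensor product and, by continuity, of its closure. The estimate used in Lemma \ref{lem:correspondence2}, namely $\|\alpha_z(A\otimes B) - A \otimes B\| \le \|\alpha_{z_1}(A) - A\|\|B\| + \|A\|\|\alpha_{z_2}(B) - B\|$, shows that elementary tensors lie in $\mathcal C_1(F^2(\mathbb C^n))$. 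Since $\mathcal C_1$ is closed, the whole space $\mathcal D_1 \otimes \mathcal E_1 \subset \mathcal C_1(F^2(\mathbb C^n))$. Translation invariance of $\mathcal D_0 \otimes \mathcal E_0$ follows similarly, since $(f\otimes g)(\cdot - (w_1, w_2)) = f(\cdot - w_1) \otimes g(\cdot - w_2)$. Each product $f\otimes g$ lies in $\operatorname{BUC}(\mathbb C^n)$ because it is bounded and $|f(z_1)g(z_2) - f(z_1')g(z_2')| \le \|f\|_\infty|g(z_2) - g(z_2')| + \|g\|_\infty |f(z_1) - f(z_1')|$.

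\textbf{Step 2 (Toeplitz and Berezin respect tensors).} Next we verify $T_{f \otimes g} = T_f \otimes T_g$ and $\widetilde{A \otimes B} = \widetilde{A} \otimes \widetilde{B}$. For the first, the projection $P$ onto $F^2(\mathbb C^n)$ is the tensor product of the projections for $V_0$ and $V_0^\perp$, since $L^2(\mu)$ factorizes and the Bergman-type kernel $e^{w\cdot\overline z/2}$ factorizes. So $T_{f\otimes g}(h_1 \otimes h_2) = T_f h_1 \otimes T_g h_2$ on elementary tensors, and density gives the identity. For the second, the normalized kernels factor, $k_{(z_1, z_2)} = k_{z_1} \otimes k_{z_2}$, which gives $\langle (A\otimes B)k_z, k_z\rangle = \widetilde{A}(z_1)\widetilde{B}(z_2)$.

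\textbf{Step 3 (mapping properties and conclusion).} For $f \in \mathcal D_0$ and $g \in \mathcal E_0$ we get $T_{f\otimes g} = T_f \otimes T_g \in \mathcal D_1 \otimes \mathcal E_1$. Since $\|T_h\| \le \|h\|_\infty$, linearity and continuity extend this to all $h$ in the closed space $\mathcal D_0 \otimes \mathcal E_0$. In the same way, $\widetilde{A\otimes B} \in \mathcal D_0 \otimes \mathcal E_0$ on elementary tensors, and contractivity of the Berezin transform, $\|\widetilde{C}\|_\infty \le \|C\|$, extends this to the closure. By the cited characterization from \cite{Fulsche2020}, the two spaces are corresponding.

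The main technical point is Step 2, namely making the identification $F^2(\mathbb C^n) \cong F^2(V_0) \otimes F^2(V_0^\perp)$ precise. Here $V_0$ is an arbitrary complex subspace, not a coordinate subspace. To reduce to coordinate subspaces, we would apply a unitary change of variables $U_\Theta$ with $\Theta \in \mathcal U(\mathbb C^n)$ mapping $V_0$ onto $\mathbb C^k \times \{0\}$. This map intertwines the Weyl operators, the Toeplitz operators and the Berezin transform with the corresponding linear changes of variable of symbols. The factorization then holds concretely for coordinate splittings, where both the Gaussian measure and the reproducing kernel are visibly products.
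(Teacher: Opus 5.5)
Your proposal is correct and takes the same approach as the paper. The paper's entire justification is that Toeplitz quantization and the Berezin transform respect tensor products, combined with the recalled characterization of corresponding spaces. You additionally supply the details the paper leaves implicit: closedness and invariance of both tensor product spaces, their inclusion in $\operatorname{BUC}(\mathbb C^n)$ and $\mathcal C_1(F^2(\mathbb C^n))$, the continuity extension from elementary tensors, and the identification $F^2(\mathbb C^n)\cong F^2(V_0)\otimes F^2(V_0^\perp)$ via a unitary change of coordinates.
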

Since $C_0(V_0)$ corresponds to $\mathcal K(F^2(V_0))$ and $\operatorname{BUC}(V_0^\perp)$ is corresponding to $\mathcal C_1(F^2(V_0^\perp))$ (see \cite{Fulsche2020} for details), we in particular have proven that $C_0(V_0) \otimes \operatorname{BUC}(V_0^\perp)$ and $\mathcal K(F^2(V_0)) \otimes \mathcal C_1(F^2(V_0^\perp))$ are corresponding spaces. Further, lemmas \ref{lem:correspondence1} and \ref{lem:correspondence2} show that $C_0(V_0) \otimes \operatorname{BUC}(V_0^\perp)$ also corresponds to $\mathcal C_{\Theta_1}(\mathcal H) \cap \dots \cap \mathcal C_{\Theta_d}(\mathcal H) \cap \mathcal C_1(\mathcal H)$. Since the correspondence of spaces is unique, we have therefore proven the following theorem:
\begin{thm}
    Let $\Theta_1, \dots, \Theta_d \in \mathcal U(\mathbb C^n)$ and set $V_0 := (\cap_{j=1}^d E_{\Theta_j}(1))^\perp$. Then,
\begin{align*}
    \mathcal C_{\Theta_1}(\mathcal H) \cap \dots \cap \mathcal C_{\Theta_d}(\mathcal H) \cap \mathcal C_1(\mathcal H) = \mathcal K(F^2(V_0)) \otimes \mathcal C_1(F^2(V_0^\perp)).
\end{align*}
\end{thm}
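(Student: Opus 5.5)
The proof uses Werner's correspondence theorem: every closed $\alpha$-invariant subspace of $\mathcal C_1(\mathcal H)$ is uniquely determined by its translation-invariant partner in $\operatorname{BUC}(\mathbb C^n)$. So I show that both sides of the claimed identity are closed, $\alpha$-invariant subspaces of $\mathcal C_1(\mathcal H)$ with the same partner, namely $\mathcal D_0 := C_0(V_0) \otimes \operatorname{BUC}(V_0^\perp)$.

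\emph{The left-hand side.} Call it $\mathcal D_1$. By the second preparatory lemma, $\mathcal D_1$ is a closed $\alpha$-invariant subspace of $\mathcal C_1(\mathcal H)$. I also need $\mathcal D_0$ to be closed and translation-invariant. Closedness is built into the $C^\ast$-tensor product. Translation invariance holds because a translation by $a = (a_1,a_2) \in V_0 \oplus V_0^\perp$ acts on an elementary tensor $g\otimes h$ by translating $g$ by $a_1$ and $h$ by $a_2$, and both factor spaces are translation-invariant. By the characterization from \cite{Fulsche2020}, $\mathcal D_0$ and $\mathcal D_1$ correspond once we know that $\widetilde A \in \mathcal D_0$ for all $A \in \mathcal D_1$, and that $T_f \in \mathcal D_1$ for all $f \in \mathcal D_0$. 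These are exactly Lemma \ref{lem:correspondence1} and Lemma \ref{lem:correspondence2}.

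\emph{The right-hand side.} Identify $F^2(\mathbb C^n) \cong F^2(V_0) \otimes F^2(V_0^\perp)$. Under this identification the Weyl operators factor as $W_{(z_1,z_2)} = W_{z_1}\otimes W_{z_2}$, since $\sigma$ splits orthogonally over the complex subspaces $V_0$ and $V_0^\perp$. Consequently $\alpha_z = \alpha_{z_1}\otimes\alpha_{z_2}$, and $\mathcal K(F^2(V_0)) \otimes \mathcal C_1(F^2(V_0^\perp))$ is a closed $\alpha$-invariant subspace of $\mathcal C_1(\mathcal H)$. Recall that $C_0(V_0)$ corresponds to $\mathcal K(F^2(V_0))$ and $\operatorname{BUC}(V_0^\perp)$ corresponds to $\mathcal C_1(F^2(V_0^\perp))$. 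The tensor-product correspondence lemma stated just before the theorem then shows that this space also corresponds to $\mathcal D_0$.

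Since the correspondence is a bijection, the two $\alpha$-invariant spaces coincide, which proves the theorem.

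The only step that is not pure bookkeeping is verifying that the tensor-product lemma really applies. This needs two facts:
\begin{itemize}
\item Toeplitz quantization respects tensor products: $T_{g\otimes h} = T_g\otimes T_h$. This holds because the Gaussian measure on $\mathbb C^n$ factors over $V_0 \oplus V_0^\perp$, and so does the projection $P$.
\item The Berezin transform respects tensor products, because the normalized reproducing kernels factor: $k_{(z_1,z_2)} = k_{z_1}\otimes k_{z_2}$.
\end{itemize}
From these, the two-sided density characterization of corresponding pairs passes to spaces spanned by elementary tensors, and then to their closures by continuity of $f \mapsto T_f$ and $A \mapsto \widetilde A$. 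I expect this closure step to be the main obstacle: one has to check that the closure in $\operatorname{BUC}$ of the algebraic tensor product is the $C^\ast$-tensor product, which follows from nuclearity as in the proof of Lemma \ref{lem:correspondence1}.
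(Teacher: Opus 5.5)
Your proposal is correct and follows the paper's argument essentially step for step. You show the intersection is a closed $\alpha$-invariant subspace, use Lemmas \ref{lem:correspondence1} and \ref{lem:correspondence2} to see it corresponds to $C_0(V_0)\otimes\operatorname{BUC}(V_0^\perp)$, use the tensor-product correspondence lemma to see $\mathcal K(F^2(V_0))\otimes\mathcal C_1(F^2(V_0^\perp))$ corresponds to the same space, and conclude by uniqueness of the correspondence, exactly as the paper does. Your extra checks (translation invariance of $\mathcal D_0$, $\alpha$-invariance of the right-hand side, and the factorization of Weyl operators, Toeplitz operators and Berezin transforms over $V_0\oplus V_0^\perp$) are details the paper leaves implicit.
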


\section{The operator-operator Fourier transform}
One of the basic tools of quantum harmonic analysis is the \emph{Fourier-Weyl transform}: For a trace class operator $A \in \mathcal T^1(\mathcal H)$ it is defined as $\mathcal F_W(A)(\xi) = \tr(AW_\xi^\ast)$. The inverse Fourier-Weyl transform is then formally defined as $\mathcal F_W^{-1}(f) = \int_\Xi f(\xi) W_\xi ~d\xi$. Here, $d\xi$ is an appropriately normalized Haar measure. 

We further recall the formula for the symplectic Fourier transform:
\begin{align*}
    \mathcal F_\sigma(f)(z) = \frac{1}{(2\pi)^n} \int_{\mathbb R^{2n}} f(w) e^{i\sigma(w,z)}~dw, \quad f \in \mathcal S(\mathbb R^{2n}).
\end{align*}
It is well-known that the symplectic Fourier transform $\mathcal F_\sigma: \mathcal S(\mathbb R^{2n}) \to \mathcal S(\mathbb R^{2n})$ extends to an operator on $L^2(\mathbb R^n)$, and equally well on $\mathcal S'(\mathbb R^n)$. Further, the Fourier-Weyl transform $\mathcal F_W$ yields a map from $\mathcal S(\mathcal H)$ (the Schwartz operators, cf. \cite{keyl_kiukas_werner16}, which can be identified with operators with integral kernel in $\mathcal S(\mathbb R^{2n})$) to $\mathcal S(\mathbb R^{2n})$, from $\mathcal T^2(\mathcal H)$ to $L^2(\mathbb R^{2n})$ and from $\mathcal S'(\mathcal H)$ (the tempered operators, dual space to $\mathcal S(\mathcal H)$) to $\mathcal S'(\mathbb R^{2n})$. On the level of these spaces, we can consider the composition $\mathcal F_W^{-1} \circ \mathcal F_\sigma$.

Recall that the Weyl quantization of a symbol $f \in \mathcal S(\mathbb R^{2n})$ is given by the operator:
\begin{align*}
    \operatorname{op}^W(f) \varphi(t) = \int_{\mathbb R^n} \int_{\mathbb R^n} f\left( \frac{x+t}{2}, \xi \right) e^{i\xi \cdot (t-x)} \varphi(x)~dx~d\xi.
\end{align*}
It is well-known (see, e.g., \cite[Lemma 5]{Fulsche_Luijk2025}) that the composition $\mathcal F_W^{-1} \circ \mathcal F_\sigma$ agrees with the Weyl quantization. Hence, on the level of $L^2$ we have the following diagram:
\[
    \begin{tikzcd}[arrows=rightarrow]
L^2(\mathbb R^n) \arrow[r, "\mathcal F_\sigma"] \arrow[dr, "\operatorname{op}^W"]&L^2(\mathbb R^n) \arrow[d, "\mathcal F_W^{-1}"]\\
&\mathcal T^2(\mathcal H)
\end{tikzcd}
\]
The analogous diagrams are true on the level of Schwartz functions and tempered distributions. 

It would be desirable to have some form of ``operator-to-operator Fourier transform'' $\mathcal F_{op}$ completing the diagram as follows:
\[
    \begin{tikzcd}[arrows=rightarrow]
L^2(\mathbb R^n) \arrow[r, "\mathcal F_\sigma"] \arrow[dr, "\operatorname{op}^W"] &L^2(\mathbb R^n) \arrow[d, "\mathcal F_W^{-1}"]\\
\mathcal T^2(\mathcal H) \arrow[u, "\mathcal F_W"] \arrow[r, "\mathcal F_{op}"] &\mathcal T^2(\mathcal H)\\
\end{tikzcd}
\]
Of course, it is now rather clear how to complete the diagram: Set $\mathcal F_{op} = \mathcal F_W^{-1} \circ \mathcal F_\sigma \circ \mathcal F_W$. With this choice, it is also clear that there are analogous diagrams on the level of Schwartz functions / operators and tempered distributions / operators.

The obvious task is now to actually compute what $\mathcal F_{op}$ looks like. Here's the answer:
\begin{thm}\label{thm:fourier}
    For $A \in\mathcal S'(\mathcal H)$ it is $\mathcal F_{op}(A) = \frac{1}{2^n} AU$.
\end{thm}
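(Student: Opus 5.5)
The plan is to verify the identity first on Schwartz operators $A \in \mathcal S(\mathcal H)$, where all transforms are absolutely convergent integrals, and then extend to $\mathcal S'(\mathcal H)$ by duality and continuity. Since $\mathcal F_W$ is a bijection $\mathcal S(\mathcal H)\to\mathcal S(\mathbb R^{2n})$, it suffices to show
\begin{align*}
    \mathcal F_W\Big(\tfrac{1}{2^n}AU\Big) = \mathcal F_\sigma(\mathcal F_W(A)), \quad A \in \mathcal S(\mathcal H).
\end{align*}
Note that $AU \in \mathcal S(\mathcal H)$ whenever $A$ is, because $U$ acts on integral kernels by $k(s,t)\mapsto k(s,-t)$.

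\textbf{Main computation.} Write $A = \int \mathcal F_W(A)(w)\, W_w\,dw$ via the inverse Fourier-Weyl transform. Then
\begin{align*}
    \mathcal F_W(AU)(\xi) = \tr(AUW_\xi^\ast) = \int \mathcal F_W(A)(w)\,\tr(W_w U W_{-\xi})\,dw .
\end{align*}
The interchange of trace and integral is justified because $\mathcal F_W(A)$ is Schwartz and the expression can be paired against trace class operators.

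Next I would evaluate the kernel $\tr(W_wUW_{-\xi})$. Using $UW_{-\xi} = W_\xi U$, the CCR relation, and $W_zU = W_{z/2}UW_{-z/2}$ (which follows from $W_{z/2}U = UW_{-z/2}$), this becomes
\begin{align*}
    \tr(W_wUW_{-\xi}) = e^{-i\sigma(w,\xi)/2}\,\tr(W_{w+\xi}U) = e^{-i\sigma(w,\xi)/2}\,\tr(U).
\end{align*}
Here $\tr(U)$ must be understood in a regularized sense; in the Fock picture it equals $\sum_\alpha (-1)^{|\alpha|}r^{|\alpha|}\to 2^{-n}$ as $r\to 1$, by Abel summation against $r^{N}$. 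Comparing with the kernel $e^{i\sigma(w,\xi)}$ of $\mathcal F_\sigma$ then fixes the constant $2^{n}$ and the scaling.

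\textbf{Main obstacle.} The delicate step is this kernel identification, since $U$ is not trace class and the phase conventions ($\sigma$, the factor $\tfrac12$ in the CCR relation, and the Haar measure normalization in $\mathcal F_W^{-1}$) must match exactly to produce $e^{i\sigma(w,\xi)}/(2\pi)^n$. To make this rigorous I would avoid traces of $U$ altogether, using one of two routes.
\begin{enumerate}
    \item Verify the identity on a total family of Schwartz operators, such as the rank-one operators $\varphi\otimes\psi$ with Gaussian or Hermite $\varphi,\psi$. In that case $\mathcal F_W(\varphi\otimes\psi)(\xi) = \langle \varphi, W_\xi\psi\rangle$ is an ambiguity function, and $(\varphi\otimes\psi)U = \varphi\otimes U\psi$. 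The needed identity then reduces to the classical relation between the ambiguity function of $(\varphi,\psi)$ and that of $(\varphi,U\psi)$, which is the symplectic Fourier transform relation between the ambiguity function and the Wigner function, checked by a direct Gaussian integral.
    \item Alternatively, use the operator identity $U = c\int W_w\,dw$ in the weak sense, which gives $AU = c\int A W_w\,dw$ directly.
\end{enumerate}
I would pursue the first route, since it sidesteps non-trace-class traces.

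\textbf{Extension to $\mathcal S'(\mathcal H)$.} Finally, both sides are continuous on $\mathcal S(\mathcal H)$ and extend by duality: $\mathcal F_W$, $\mathcal F_\sigma$ and $A\mapsto AU$ each have transposes of the same form, and $\mathcal S(\mathcal H)$ is weak-$\ast$ dense in $\mathcal S'(\mathcal H)$. This yields $\mathcal F_{op}(A) = 2^{-n}AU$ on all of $\mathcal S'(\mathcal H)$, and in particular on $\mathcal T^2(\mathcal H)$, consistent with the $L^2$ diagram.
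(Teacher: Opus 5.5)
Your main computation is the paper's argument written out. The paper uses $\mathcal F_W(AU)=\mathcal F_W(A)\ast_\sigma\mathcal F_W(U)$ with $\mathcal F_W(U)$ constant, and expanding $A$ in Weyl operators and evaluating $\tr(W_wUW_{-\xi})$ is exactly that twisted convolution; your route (2) is literally the paper's. The gap is the sentence saying that comparison with the kernel of $\mathcal F_\sigma$ ``fixes the constant and the scaling''. Your formula $\tr(W_wUW_{-\xi})=2^{-n}e^{-i\sigma(w,\xi)/2}$ is correct for the stated CCR. Take $\mathcal F_W^{-1}f=(2\pi)^{-n}\int f(w)W_w\,dw$, which is the normalization that makes it the inverse of $\mathcal F_W$ (since $\tr(W_v)=(2\pi)^n\delta(v)$). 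Then your formula gives
\begin{align*}
    \mathcal F_W(AU)(\xi)=\frac{1}{2^n(2\pi)^n}\int_{\mathbb R^{2n}}\mathcal F_W(A)(w)\,e^{-i\sigma(w,\xi)/2}\,dw=\frac{1}{2^n}\,\mathcal F_\sigma\big(\mathcal F_W(A)\big)\big(-\tfrac{\xi}{2}\big).
\end{align*}
No constant turns $e^{-i\sigma(w,\xi)/2}$ into $e^{i\sigma(w,\xi)}$, and the substitution $\xi\mapsto-\xi/2$ cannot be absorbed into a scalar factor on the operator side. So this does not give $\mathcal F_{op}(A)=2^{-n}AU$. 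Route (1) does not repair this; it only avoids regularizing $\tr(U)$. Run the Gaussian integral you propose with $\varphi=\psi=\varphi_0$, $\varphi_0(t)=\pi^{-n/4}e^{-|t|^2/2}$, and $P_0=\varphi_0\otimes\varphi_0$. You get $P_0U=P_0$ and $\mathcal F_W(P_0)(\xi)=e^{-|\xi|^2/4}$, but $\mathcal F_\sigma\mathcal F_W(P_0)(\xi)=2^ne^{-|\xi|^2}$, so $\mathcal F_{op}(P_0)$ is not a multiple of $P_0U$.

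In fact the identity cannot be proved as stated. $\mathcal F_\sigma$ as defined is an involution, hence so is $\mathcal F_{op}=\mathcal F_W^{-1}\mathcal F_\sigma\mathcal F_W$, whereas $A\mapsto 2^{-n}AU$ squares to $4^{-n}\,\mathrm{id}$. What your computation does prove (on $\mathcal S(\mathcal H)$, then by your duality argument) is $\mathcal F_W^{-1}\mathcal F'\mathcal F_W(A)=AU$. Here $\mathcal F'g(\xi)=(4\pi)^{-n}\int_{\mathbb R^{2n}}g(w)e^{-i\sigma(w,\xi)/2}\,dw$ is the symplectic Fourier transform matched to the phase in $W_zW_w=e^{-i\sigma(z,w)/2}W_{z+w}$; use $2^{-n}(2\pi)^{-n}=(4\pi)^{-n}$. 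This version is consistent with involutivity. The paper's proof passes over the same point in three places:
\begin{itemize}
\item its twisted convolution with phase $e^{i\sigma(\xi,w)}$ does not match the stated CCR, which forces $e^{-i\sigma(\xi,w)/2}$;
\item its value $\mathcal F_W(U)=(4\pi)^{-n}$ differs from the trace $2^{-n}$ you found;
\item its last line $\mathcal F_W(AU)=2^{-n}\mathcal F_\sigma\mathcal F_W(A)$ would actually give $\mathcal F_{op}(A)=2^nAU$.
\end{itemize}
The step you labelled delicate is exactly where both arguments break. It has to be carried out explicitly with one fixed convention for $\mathcal F_\sigma$, not asserted.
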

Before proving the theorem, we also need the following lemma. The statement is well-known, but for completeness we provide the proof:
\begin{lem}
    It is $\mathcal F_W(U) = \frac{1}{(4\pi)^n}$.
\end{lem}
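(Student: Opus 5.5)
The plan is to compute $\mathcal F_W(U)$ as a tempered distribution, by testing against Schwartz functions and evaluating a trace through integral kernels in the concrete realization $\mathcal H = L^2(\mathbb R^n)$. By the Stone--von Neumann unitary equivalence, the Fock space picture gives the same result. Since $U$ is bounded but not trace class, $\operatorname{Tr}(UW_\xi^\ast)$ has no literal meaning. I will therefore interpret $\mathcal F_W(U)$ via the duality $\mathcal S'(\mathcal H)\times\mathcal S(\mathcal H)$ used to extend $\mathcal F_W$. Concretely, for $g\in\mathcal S(\mathbb R^{2n})$ I set $\langle \mathcal F_W(U), g\rangle := \operatorname{Tr}\big(U \int g(\xi) W_\xi^\ast\,d\xi\big)$. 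The operator $\int g(\xi)W_\xi^\ast\,d\xi$ is a Schwartz operator, so $U$ times it is trace class and the pairing is well defined. The claim then reduces to showing that this pairing equals $\frac{1}{(4\pi)^n}\int g(\xi)\,d\xi$.

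First I write the kernel. From the formula for the Weyl operators, $W_{(x,\xi)}^\ast = W_{(-x,-\xi)}$ acts by $f(t)\mapsto e^{i\xi\cdot t + i\frac{x\cdot\xi}{2}}f(t+x)$. Composing with $U$ gives $UW_{(x,\xi)}^\ast f(t) = e^{-i\xi\cdot t+i\frac{x\cdot \xi}{2}} f(x-t)$. The kernel of $U\int g(x,\xi)W^\ast_{(x,\xi)}\,dx\,d\xi$ is therefore obtained by substituting $s = x-t$, i.e. $x=s+t$:
\begin{align*}
K(t,s) = c_n\int_{\mathbb R^n} g(s+t,\xi)\, e^{-i\xi\cdot t + i\frac{(s+t)\cdot\xi}{2}}\,d\xi .
\end{align*}
Here $c_n$ is the normalization constant of the Haar measure $d\xi$ on phase space. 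This kernel is a Schwartz function of $(t,s)$.

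For a trace class operator with Schwartz kernel, the trace equals the integral of the kernel over the diagonal. Setting $s=t$ gives
\begin{align*}
\operatorname{Tr} = c_n\int\!\!\int g(2t,\xi)\, e^{-i\xi\cdot t + i t\cdot\xi}\,d\xi\,dt = c_n\int\!\!\int g(2t,\xi)\,d\xi\,dt.
\end{align*}
The phases cancel exactly, which is the decisive point. The substitution $x=2t$ then yields $\frac{c_n}{2^n}\int g$. So $\mathcal F_W(U)$ is the constant $c_n 2^{-n}$ (with respect to the same measure).

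The main obstacle is bookkeeping of normalizations rather than analysis. With the paper's choice of Haar measure, $c_n=(2\pi)^{-n}$, which is the one making $\mathcal F_W$ unitary from $\mathcal T^2$ onto $L^2$ and consistent with $\mathcal F_\sigma$. This choice produces the constant $\frac{1}{(4\pi)^n}$. I would fix $c_n$ by checking it against the Plancherel identity $\|\mathcal F_W(A)\|_{L^2}=\|A\|_{\mathcal T^2}$ on a rank-one operator. I would also double-check the phase conventions in $W_z$, since a sign error there would leave a nonconstant oscillating factor in place of the exact cancellation above.
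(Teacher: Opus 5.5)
Your route differs from the paper's. The paper never writes down a kernel for $UW_\xi^\ast$. It computes $\operatorname{op}^W(\delta_0)=2^nU$ from its displayed Weyl formula, uses $\mathcal F_\sigma(\delta_0)=(2\pi)^{-n}$, and applies $\mathcal F_W$ to the identity $\operatorname{op}^W=\mathcal F_W^{-1}\circ\mathcal F_\sigma$. Your kernel $K(t,s)$, the trace-class justification and the exact phase cancellation on the diagonal are all correct, and the direct computation is more self-contained.

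\textbf{The gap is the last step}, where you read off the constant. You defined the pairing with $d\xi=c_n\,dz$. For trace-class $A$ that pairing is
\begin{align*}
\tr\Bigl(A\int g(\xi)W_\xi^\ast\,d\xi\Bigr)=\int g(\xi)\,\tr(AW_\xi^\ast)\,d\xi=c_n\int g(z)\,\mathcal F_W(A)(z)\,dz .
\end{align*}
So your result $\frac{c_n}{2^n}\int g\,dz=2^{-n}\int g\,d\xi$ says that $\mathcal F_W(U)$ is the constant $2^{-n}$. The factor $c_n$ cancels, and no choice of Haar measure, Plancherel or otherwise, changes that. This agrees with the formal pointwise computation
\begin{align*}
\tr(UW_{(x,\xi)}^\ast)=\int e^{-i\xi\cdot t+i\frac{x\cdot\xi}{2}}\,\delta(2t-x)\,dt=2^{-n},
\end{align*}
in which no measure normalization enters at all. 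Calling $c_n2^{-n}$ the constant ``with respect to the same measure'' reads a Haar-paired distribution as a Lebesgue density. Under that reading your extension would assign $c_n\mathcal F_W(A)$, not $\mathcal F_W(A)$, to a trace-class $A$. So it is not an extension of $\mathcal F_W(A)(\xi)=\tr(AW_\xi^\ast)$, and the sentence ``this choice produces the constant $\frac{1}{(4\pi)^n}$'' is not a derivation.

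What your computation does establish is the normalization-free identity of measures $\mathcal F_W(U)\,d\xi=(4\pi)^{-n}\,dz$. Equivalently, $\tr\bigl(U\int g(z)W_z^\ast\,(2\pi)^{-n}dz\bigr)=(4\pi)^{-n}\int g(z)\,dz$. The constant in the lemma is exactly this Lebesgue density, and the paper's proof reaches it through the same mixing of measures:
\begin{itemize}
\item Its displayed $\operatorname{op}^W$ has no factor $(2\pi)^{-n}$, so $\operatorname{op}^W(1)=(2\pi)^nI$.
\item Hence $\operatorname{op}^W=\mathcal F_W^{-1}\circ\mathcal F_\sigma$ holds only with $\mathcal F_W^{-1}f=\int f(z)W_z\,dz$ taken against Lebesgue measure. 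That operator inverts the pointwise trace only up to a factor $(2\pi)^n$, and this factor is dropped in step 3 of the paper's proof, where $\mathcal F_W\circ\mathcal F_W^{-1}=\mathrm{id}$ is used.
\item In the proof of Theorem \ref{thm:fourier}, the constant $(4\pi)^{-n}$ is then integrated against Lebesgue $dw$.
\end{itemize}
To make your proposal prove the lemma as stated, say explicitly that $\mathcal F_W(U)$ is identified with its density relative to Lebesgue measure after pairing against the normalized Haar measure. Note also that this is the convention Theorem \ref{thm:fourier} relies on. Do not present $(4\pi)^{-n}$ as the value of $\tr(UW_\xi^\ast)$, which is $2^{-n}$.
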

\begin{proof}
    The proof consists in two steps:
    \begin{enumerate}
        \item $\operatorname{op}^W(\delta_0) = 2^n U$. To check this, we simply compute by definition for $f \in \mathcal S(\mathbb R^{2n})$ and $\varphi \in \mathcal S(\mathbb R^n)$:
        \begin{align*}
            \operatorname{op}^W(f) \varphi(t) &= \int_{\mathbb R^n} \int_{\mathbb R^n} f\left( \frac{x+t}{2}, \xi \right) e^{i\xi \cdot (t-x)} \varphi(x)~dx~d\xi\\
            &= \int_{\mathbb R^n} \int_{\mathbb R^n} f\left(\frac{x}{2}, \xi\right) e^{i\xi \cdot (2t - x)} \varphi(x-t)~dx~d\xi\\
            &= 2^n \int_{\mathbb R^n} \int_{\mathbb R^n} f(x,\xi) e^{i\xi \cdot (2t - 2x)} \varphi(2x - t) ~dx~d\xi.
        \end{align*}
        With this representation of the Weyl pseudodifferential operator, it is not hard to verify by a density argument that for $f = \delta_0$ we get (where the equalities have to be interpreted in the weak sense):
        \begin{align*}
            \operatorname{op}^W(\delta_0)\varphi(t) &= 2^n \int_{\mathbb R^{n}} \int_{\mathbb R^n} e^{i\xi \cdot (2t - 2x)} \varphi(2x - t) ~d\delta_0(x) ~d\delta_0(\xi)\\
            &= 2^n \int_{\mathbb R^{n}} e^{i\xi \cdot 2t} \varphi(-t)~d\delta_0(\xi) = 2^n\varphi(-t) = 2^n U\varphi(t).
        \end{align*}
        We want to emphasize that this relation been known for a much longer time, see for example \cite{Grossmann1976}.
        \item $\mathcal F_\sigma(\delta_0) = \frac{1}{(2\pi)^n}$. This follows right from the definition, which also has to be interpreted in the appropriate weak sense:
        \begin{align*}
            \mathcal F_\sigma(\delta_0) = \frac{1}{(2\pi)^n} \int_{\mathbb R^{2n}} e^{i\sigma(z,w)}~d\delta_0(z) = \frac{1}{(2\pi)^n}
        \end{align*}
        \item To arrive at the final result, note that 
        \begin{align*} 
        \mathcal F_W(U) = \frac{1}{2^n} \mathcal F_W \circ \mathcal F_W^{-1} \circ \mathcal F_\sigma(\delta_0) = \frac{1}{2^n} \mathcal F_\sigma(\delta_0) = \frac{1}{(4\pi)^n}.
        \end{align*}
        This finishes the proof.\qedhere
    \end{enumerate}
\end{proof}
\begin{proof}[Proof of Theorem \ref{thm:fourier}]
    We are going to prove the statement for $A$ being a Schwartz operator. By density, the statement then extends to all tempered operators, in particular to all bounded operators.

We recall from \cite{Werner1984} (see also \cite[Corollary 6.21]{Fulsche_Galke2025}) that for $A, B \in \mathcal T^1(\mathcal H)$ we have the following identity:
\begin{align}\label{id:twistedconv}
    \mathcal F_W(BA) = \mathcal F_W(B) \ast_\sigma \mathcal F_W(A).
\end{align}
Here, $f \ast_\sigma g$ is the \emph{twisted convolution}:
\begin{align*}
    f \ast_\sigma g(\xi) = \int_\Xi f(\xi-w) g(w) e^{i\sigma(\xi, w)}~dw.
\end{align*}
Note that, as discussed in \cite{keyl_kiukas_werner16}, Eq.~\eqref{id:twistedconv} holds true on a large general scale. For example, when $A$ is a Schwartz operator and $B$ is a tempered operator (in the sense of \cite{keyl_kiukas_werner16}), this identity is still valid. Hence, for $A \in \mathcal S(\mathcal H)$ we can let $B = U$ to obtain:
\begin{align*}
    \mathcal F_W(AU)(\xi) &= \mathcal F_W(A) \ast_\sigma \mathcal F_W(U)(\xi)\\
    &= \frac{1}{(4\pi)^n}\int_{\mathbb R^{2n}} \mathcal F_W(A)(\xi - w) e^{i\sigma(\xi,w)}~dw\\
    &= \frac{1}{(4\pi)^n} \int_{\mathbb R^{2n}} \mathcal F_W(A)(w)e^{i\sigma(w, \xi)}~dw\\
    &= \frac{1}{2^n}\mathcal F_\sigma \mathcal F_W(A)(\xi).
\end{align*}
This concludes the proof.
\end{proof}
Recall that $\alpha_z(A) = W_z A W_z^\ast$ and $\gamma_z(A) = W_{\frac{z}{2}} A W_{\frac{z}{2}}$. Then, from the covariance properties of $\mathcal F_W$ and $\mathcal F_\sigma$, one immediately obtains (as one can now also check directly):
\begin{lem}
    Let $A \in \mathcal L(\mathcal H)$. Then, for $z \in \mathbb R^{2n}$: $\alpha_z(\mathcal F_{op}(A)) = \mathcal F_{op}(\gamma_{2z}(A))$ and $\gamma_z(\mathcal F_{op}(A)) = \mathcal F_{op}(\alpha_{z/2}(A))$. 
\end{lem}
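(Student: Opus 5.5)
We use Theorem \ref{thm:fourier}, which says $\mathcal F_{op}(A) = \frac{1}{2^n}AU$. This reduces both identities to short operator computations built from two relations: the defining relation $W_zU = UW_{-z}$ and the exponentiated CCR. Since $\alpha_z$ and $\gamma_z$ are defined on all of $\mathcal L(\mathcal H)$, and $AU$ is bounded whenever $A$ is, both sides make sense for $A \in \mathcal L(\mathcal H)$. The constant $\frac{1}{2^n}$ is linear and commutes with everything, so we can drop it.

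\emph{First identity.} We have $\alpha_z(AU) = W_z A U W_z^\ast = W_zAUW_{-z}$. Moving $U$ to the right with $UW_{-z} = W_zU$ gives $W_zAW_zU$. By definition, $\gamma_{2z}(A) = W_zAW_z$, so $\alpha_z(AU) = \gamma_{2z}(A)U$. Multiplying by $\frac{1}{2^n}$ yields $\alpha_z(\mathcal F_{op}(A)) = \mathcal F_{op}(\gamma_{2z}(A))$.

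\emph{Second identity.} We have $\gamma_z(AU) = W_{z/2}AUW_{z/2}$. Now $UW_{z/2} = W_{-z/2}U$, and $W_{-z/2} = W_{z/2}^\ast$; the latter follows from the CCR, since $\sigma(w,-w)=0$ gives $W_wW_{-w} = W_0 = I$. Hence $\gamma_z(AU) = W_{z/2}AW_{z/2}^\ast U = \alpha_{z/2}(A)U$, which gives $\gamma_z(\mathcal F_{op}(A)) = \mathcal F_{op}(\alpha_{z/2}(A))$.

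There is no real obstacle here. The only point to check is the sign convention: that $W_{-w} = W_w^\ast$ holds exactly, with no phase factor, in the Weyl system used. This is immediate from the CCR as written, because $\sigma(w,-w)=0$.

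As a consistency check, the same identities should follow from covariance on the function side. Under $\mathcal F_W$, the map $\alpha_z$ becomes multiplication by a character $e^{i\sigma(\cdot,z)}$, and $\gamma$ becomes a translation. The symplectic Fourier transform interchanges modulations and translations, and the factor $2$ versus $\frac12$ comes from the definition $\gamma_z(A) = W_{z/2}AW_{z/2}$. Since the direct operator computation above is complete, we would use this only as a sanity check and not as the proof.
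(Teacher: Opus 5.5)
Your proof is correct, given Theorem~\ref{thm:fourier}. Both identities reduce to $\alpha_z(AU)=W_zAW_zU=\gamma_{2z}(A)U$ and $\gamma_z(AU)=W_{z/2}AW_{-z/2}U=\alpha_{z/2}(A)U$. These use only $UW_{-w}=W_wU$ and $W_w^\ast=W_{-w}$.

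The paper mentions this direct check only in parentheses. The justification it actually gives is the covariance route: $\mathcal F_W$ turns $\alpha_z$ into multiplication by a symplectic character and $\gamma_z$ into a translation, $\mathcal F_\sigma$ exchanges translations and characters, and one conjugates back. That route does not need Theorem~\ref{thm:fourier}. However, it lives on the function side, so for bounded $A$ it needs $\mathcal F_W$ on tempered operators, and it depends on all phase and scaling conventions fitting together. Your route costs Theorem~\ref{thm:fourier}. In return it is a two-line identity valid for every bounded $A$, and it only uses that $\mathcal F_{op}$ is right multiplication by some scalar multiple of $U$, so the value of the constant is irrelevant.

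One caution about your closing sanity check, which you assert works: with the formulas literally as printed in the paper, it does not.
\begin{itemize}
\item From $W_zW_w=e^{-i\sigma(z,w)/2}W_{z+w}$ one gets $\mathcal F_W(\alpha_zB)(\xi)=e^{i\sigma(\xi,z)}\mathcal F_W(B)(\xi)$ and $\mathcal F_W(\gamma_zB)(\xi)=\mathcal F_W(B)(\xi-z)$.
\item The printed $\mathcal F_\sigma$ (kernel $e^{i\sigma(w,\xi)}$) sends $f(\cdot-a)$ to $e^{i\sigma(a,\cdot)}\mathcal F_\sigma f$.
\item Chaining these gives $\mathcal F_{op}\circ\gamma_{2z}=\alpha_{-2z}\circ\mathcal F_{op}$, not $\alpha_z\circ\mathcal F_{op}$.
\end{itemize}
The scalings in the lemma, and Theorem~\ref{thm:fourier} itself, come out right only when $\mathcal F_\sigma$ and the twisted convolution are normalized compatibly with the CCR. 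The printed ones match a CCR of the form $W_zW_w=e^{i\sigma(z,w)}W_{z+w}$. With the printed CCR, a direct computation gives $\mathcal F_W(AU)(\xi)=c\int\mathcal F_W(A)(w)e^{i\sigma(\xi,w)/2}\,dw$, so the compatible kernel is $e^{i\sigma(\xi,w)/2}$.

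So the function-side check would expose a normalization mismatch in the paper rather than confirm your computation. Your operator-level argument is unaffected by this, because $W_w^\ast=W_{-w}$ holds for any phase convention of this type, since $\sigma(w,-w)=0$.
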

\begin{rem}
    Formally, $\mathcal F_{op}$ also satisfies properties analogous to the properties of the symplectic Fourier transform and the Fourier-Weyl transform. Nevertheless, having the inclusion properties of the Schatten classes available ($\mathcal T^p(\mathcal H) \subset \mathcal T^q(\mathcal H)$ for $1 \leq p \leq q$), these properties are rather trivial. For example, the Riemann-Lebesgue lemma for $\mathcal F_{op}$ would state that $\mathcal F_{op}$ maps $\mathcal T^1(\mathcal H)$ to $\mathcal K(\mathcal H)$, which is of course trivial.
\end{rem}

 Theorem \ref{thm:fourier} now easily yields the following result. Note that related statements have been obtained in the literature (\cite{Toft2002, Bauer_Fulsche_Toft2025}), but those statements were always formulated for certain Banach ideals and the proofs had a different flavor.
\begin{thm}
    Let $\mathcal I \subset \mathcal L(\mathcal H)$ be any algebraic right-ideal of $\mathcal L(\mathcal H)$. Then, for $f \in\mathcal S'(\mathbb R^{2n})$, the following holds true:
    \begin{align*}
        \operatorname{op}^W(f) \in \mathcal I \Leftrightarrow \operatorname{op}^W(\mathcal F_\sigma(f)) \in \mathcal I.
    \end{align*}
\end{thm}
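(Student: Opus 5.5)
The plan is to translate $\operatorname{op}^W(\mathcal F_\sigma(f))$ into $\operatorname{op}^W(f)$ composed with the parity operator, and then use that a right ideal is stable under right multiplication by the unitary $U$.

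First I establish the identity
\begin{align*}
    \operatorname{op}^W(\mathcal F_\sigma(f)) = \tfrac{1}{2^n}\operatorname{op}^W(f)\,U, \qquad f \in \mathcal S'(\mathbb R^{2n}).
\end{align*}
Set $A := \operatorname{op}^W(f) \in \mathcal S'(\mathcal H)$. Since $\operatorname{op}^W = \mathcal F_W^{-1}\circ \mathcal F_\sigma$ on tempered distributions, we have $\mathcal F_W(A) = \mathcal F_\sigma(f)$. Hence
\begin{align*}
    \operatorname{op}^W(\mathcal F_\sigma(f)) = \mathcal F_W^{-1}\mathcal F_\sigma(\mathcal F_\sigma f) = \mathcal F_W^{-1}\mathcal F_\sigma \mathcal F_W(A) = \mathcal F_{op}(A),
\end{align*}
and Theorem \ref{thm:fourier} gives $\mathcal F_{op}(A) = \frac{1}{2^n}AU$. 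Note that this step does not need $\mathcal F_\sigma$ to be an involution. It only uses that $\mathcal F_W$ is a bijection $\mathcal S'(\mathcal H)\to\mathcal S'(\mathbb R^{2n})$ with inverse $\mathcal F_W^{-1}$, which is implicit in the setup of the diagram.

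For the direction ($\Rightarrow$), suppose $\operatorname{op}^W(f)\in\mathcal I$. Then $\operatorname{op}^W(f)$ is a bounded operator, and since $\mathcal I$ is a right ideal, $\frac{1}{2^n}\operatorname{op}^W(f)U\in\mathcal I$. This operator equals $\operatorname{op}^W(\mathcal F_\sigma f)$.

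For the direction ($\Leftarrow$), suppose $\operatorname{op}^W(\mathcal F_\sigma f)\in\mathcal I$. This operator is bounded and equals $\frac{1}{2^n}\operatorname{op}^W(f)U$. Since $U^2 = I$, we get $\operatorname{op}^W(f) = 2^n\operatorname{op}^W(\mathcal F_\sigma f)\,U$, which lies in $\mathcal I$ again by the right ideal property. In particular, $\operatorname{op}^W(f)$ is bounded precisely when $\operatorname{op}^W(\mathcal F_\sigma f)$ is, so both sides of the equivalence are meaningful simultaneously. Alternatively, one can apply ($\Rightarrow$) to $\mathcal F_\sigma f$ and use that $\mathcal F_\sigma$ is an involution.

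The only delicate point is the validity of Theorem \ref{thm:fourier} on all of $\mathcal S'(\mathcal H)$, which the paper obtains by density. I also need the product $AU$ for tempered $A$ to be interpreted consistently, meaning that right multiplication by the unitary $U$ preserves $\mathcal S(\mathcal H)$ and so extends by duality. With that theorem granted, the argument above is purely algebraic.
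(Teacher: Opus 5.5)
Your argument is the paper's argument step for step. You rewrite $\operatorname{op}^W(\mathcal F_\sigma f)$ as $\mathcal F_{op}(\operatorname{op}^W f)$, invoke Theorem~\ref{thm:fourier} to get $2^{-n}\operatorname{op}^W(f)U$, and use stability of a right ideal under right multiplication by $U$. Your converse via $U^2=I$ is what the paper means by ``applying this reasoning once again,'' and the ideal-theoretic part is fine. You are right that everything hinges on Theorem~\ref{thm:fourier}, but the issue is not the density extension to $\mathcal S'(\mathcal H)$. With $\operatorname{op}^W$ and $\mathcal F_\sigma$ normalized exactly as in the paper, the identity $\operatorname{op}^W(\mathcal F_\sigma f)=2^{-n}\operatorname{op}^W(f)U$ already fails for Schwartz symbols. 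So does the equivalence you are proving. The defect is not merely a constant.

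\textbf{Counterexample.} Take $n=1$, $f(x,\xi)=e^{-x^2-\xi^2}$, and $\mathcal I$ the finite-rank operators (a two-sided, hence right, ideal).
\begin{enumerate}
\item From the paper's formula, $\operatorname{op}^W(f)$ has kernel $\sqrt{\pi}\,e^{-(t^2+x^2)/2}$, so it has rank one.
\item On the other hand $\mathcal F_\sigma f=\tfrac12 e^{-(x^2+\xi^2)/4}$. Then $\operatorname{op}^W(\mathcal F_\sigma f)$ has kernel $\sqrt{\pi}\,e^{-(t-x)^2-(t+x)^2/16}$.
\item By Mehler's formula this kernel equals $\tfrac{4\pi}{5}\sum_{k\ge 0}(3/5)^k h_k(t)h_k(x)$, with $h_k$ the Hermite functions. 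So $\operatorname{op}^W(\mathcal F_\sigma f)$ has infinite rank, and the equivalence fails.
\end{enumerate}

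\textbf{Where the paper's proof of Theorem~\ref{thm:fourier} slips.} With $W_zW_w=e^{-i\sigma(z,w)/2}W_{z+w}$, the twisted convolution representing $\mathcal F_W(BA)$ carries the phase $e^{-i\sigma(\xi,w)/2}$, not $e^{i\sigma(\xi,w)}$. The last integral there is therefore, up to constants, $\mathcal F_\sigma\mathcal F_W(A)$ evaluated at $-\xi/2$ rather than at $\xi$.

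\textbf{Correct identity.} Checking on plane waves $e^{i\sigma(a,\cdot)}$ in the paper's conventions gives $\operatorname{op}^W(f)\,U=2^n\operatorname{op}^W\bigl((\mathcal F_\sigma f)(2\,\cdot)\bigr)$. Equivalently, $\operatorname{op}^W(\mathcal F_\sigma f)=2^n\operatorname{op}^W\bigl(f(2\,\cdot)\bigr)\,U$. The dilation $f\mapsto f(2\,\cdot)$ does not preserve ideals; in the example, $e^{-4|z|^2}$ quantizes to an infinite-rank operator.

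\textbf{How to repair it.} Your proof, and the paper's, become correct verbatim if $\mathcal F_\sigma$ is replaced by the transform with prefactor $\pi^{-n}$ and phase $e^{2i\sigma}$, as in Toft's work:
\[
\widetilde{\mathcal F}_\sigma f(z)=\pi^{-n}\int_{\mathbb R^{2n}}f(w)\,e^{2i\sigma(w,z)}\,dw=2^n(\mathcal F_\sigma f)(2z).
\]
This is again an involution, and it satisfies $\operatorname{op}^W(\widetilde{\mathcal F}_\sigma f)=\operatorname{op}^W(f)\,U$ exactly.
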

\begin{proof}
    If $A = \operatorname{op}^W(f)$, then $A \in \mathcal I$ clearly implies $\frac{1}{2^n} AU = \operatorname{op}^W(\mathcal F_\sigma(f)) \in \mathcal I$. Applying this reasoning once again yields the other implication.
\end{proof}
We give one example which shows that the class of ideals in the result is strictly larger than the class of Schatten-von Neumann ideals:
\begin{ex}\label{ex:left_ideal}
    Let $X \subset \mathcal H$ be a closed subspace and
    \begin{align*}
        \mathcal I_X^\ast := \{ A \in \mathcal L(\mathcal H): ~A^\ast|_X = 0\}.
    \end{align*}
    Then, $\mathcal I_X^\ast$ is a right-ideal, hence the previous result states that for any $f \in \mathcal S'(\Xi)$ with $\operatorname{op}^w(f)$ bounded it is $\operatorname{op}^w(f) \in \mathcal I_X^\ast$ if and only if $\operatorname{op}^w(\mathcal F_\sigma(f)) \in \mathcal I_X^\ast$.
\end{ex}
It is not hard to verify (and well-known) that $U \operatorname{op}^W(f) U = \operatorname{op}^W(\beta_-(f))$, where $\beta_-(f)(z) = f(-z)$. In particular, we see that:
\begin{lem}
    Let $f \in \mathcal S'(\mathbb R^{2n})$. Then, $\frac{1}{2^n}U\operatorname{op}^W(f) = \operatorname{op}^W(\mathcal F_\sigma(\beta_-(f)))$. 
\end{lem}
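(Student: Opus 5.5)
The plan is to combine Theorem~\ref{thm:fourier} with the conjugation identity $U\operatorname{op}^W(g)U = \operatorname{op}^W(\beta_-(g))$ stated just before the lemma, using only that $U^2 = I$.

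First, by Theorem~\ref{thm:fourier} together with $\mathcal F_{op} = \mathcal F_W^{-1}\circ\mathcal F_\sigma\circ\mathcal F_W$ and $\operatorname{op}^W = \mathcal F_W^{-1}\circ\mathcal F_\sigma$, we have for every $g \in \mathcal S'(\mathbb R^{2n})$
\begin{align*}
    \tfrac{1}{2^n}\operatorname{op}^W(g)U = \mathcal F_{op}(\operatorname{op}^W(g)) = \mathcal F_W^{-1}\mathcal F_\sigma \mathcal F_W \mathcal F_W^{-1}\mathcal F_\sigma(g) = \operatorname{op}^W(\mathcal F_\sigma(g)).
\end{align*}
This is the identity already used in the proof of the ideal theorem.

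Next, I rewrite $U\operatorname{op}^W(f)$ as a right multiplication. Since $U^2=I$,
\begin{align*}
    U\operatorname{op}^W(f) = (U\operatorname{op}^W(f)U)U = \operatorname{op}^W(\beta_-(f))U.
\end{align*}
Applying the first step with $g = \beta_-(f)$ then gives
\begin{align*}
    \tfrac{1}{2^n}U\operatorname{op}^W(f) = \tfrac{1}{2^n}\operatorname{op}^W(\beta_-(f))U = \operatorname{op}^W(\mathcal F_\sigma(\beta_-(f))),
\end{align*}
which is the claim.

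The only point needing care is that everything is meant on tempered distributions and tempered operators. Composition of a tempered operator with the bounded unitary $U$ (which preserves Schwartz functions) is well defined, and $\beta_-$ acts on $\mathcal S'(\mathbb R^{2n})$. The conjugation identity $U\operatorname{op}^W(g)U = \operatorname{op}^W(\beta_-(g))$ holds for Schwartz symbols by the substitution $t\mapsto -t$, $x\mapsto -x$, $\xi\mapsto-\xi$ in the integral formula for $\operatorname{op}^W$. It then extends to $\mathcal S'$ by duality and density, in the same way Theorem~\ref{thm:fourier} is extended. I expect no real obstacle beyond this routine extension. As an optional sanity check, $\mathcal F_\sigma$ commutes with $\beta_-$, so the result can also be written as $\operatorname{op}^W(\beta_-(\mathcal F_\sigma f))$.
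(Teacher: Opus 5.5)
Your proposal is correct and takes essentially the same route as the paper. The paper also obtains the lemma by writing $U\operatorname{op}^W(f) = \operatorname{op}^W(\beta_-(f))U$ via the conjugation identity and $U^2 = I$, and then applying Theorem~\ref{thm:fourier} in the form $\frac{1}{2^n}\operatorname{op}^W(g)U = \operatorname{op}^W(\mathcal F_\sigma(g))$ with $g = \beta_-(f)$.
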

We therefore obtain the following two versions of the above theorem:
\begin{thm}
\begin{enumerate}
    \item Let $\mathcal I\subset \mathcal L(\mathcal H)$ be any algebraic left-ideal of $\mathcal L(\mathcal H)$. Then, for $f \in \mathcal S'(\mathbb R^{2n})$, the following holds true:
    \begin{align*}
        \operatorname{op}^W(f) \in \mathcal I \Leftrightarrow \operatorname{op}^W(\mathcal F_\sigma(\beta_-(f))) \in \mathcal I.
    \end{align*}
    \item Let $\mathcal I\subset \mathcal L(\mathcal H)$ be any algebraic two-sided ideal of $\mathcal L(\mathcal H)$. Then, for $f \in \mathcal S'(\mathbb R^{2n})$, the following holds true:
    \begin{align*}
        \operatorname{op}^W(f) \in \mathcal I \Leftrightarrow \operatorname{op}^W(\mathcal F_\sigma(f)) \in \mathcal I \Leftrightarrow \operatorname{op}^W(\beta_-(f)) \in \mathcal I \Leftrightarrow \operatorname{op}^W(\mathcal F_\sigma\beta_-(f)) \in \mathcal I.
    \end{align*}
\end{enumerate}
\end{thm}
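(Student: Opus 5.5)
The plan is to reduce both parts to Theorem \ref{thm:fourier}, the preceding right-ideal theorem, and the lemma just proved:
\begin{align*}
\tfrac{1}{2^n}U\operatorname{op}^W(f) = \operatorname{op}^W(\mathcal F_\sigma(\beta_-(f))).
\end{align*}
Throughout, I use that $U$ is bounded, self-adjoint and unitary, so $U^2 = I$. Hence multiplying by $U$ on either side is an invertible operation on $\mathcal L(\mathcal H)$.

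For part (1), let $\mathcal I$ be an algebraic left ideal and put $A = \operatorname{op}^W(f)$. If $A \in \mathcal I$, then $\frac{1}{2^n}UA \in \mathcal I$, because left ideals absorb multiplication by bounded operators from the left and are closed under scalars. By the lemma this says $\operatorname{op}^W(\mathcal F_\sigma(\beta_-(f))) \in \mathcal I$.

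For the converse I will not apply the lemma a second time in the symbol variable. Instead, assume $B := \operatorname{op}^W(\mathcal F_\sigma(\beta_-(f))) \in \mathcal I$. Since $B = \frac{1}{2^n}UA$, we have $A = 2^n U B$, which lies in $\mathcal I$ by left absorption.

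For part (2), a two-sided ideal is both a left and a right ideal. The first equivalence is then the right-ideal theorem proved just above, and the last equivalence in the chain is part (1).

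For the middle equivalence, use the identity $U\operatorname{op}^W(f)U = \operatorname{op}^W(\beta_-(f))$ recalled before the lemma. If $\operatorname{op}^W(f) \in \mathcal I$, then conjugating by $U$ keeps it in $\mathcal I$ by two-sided absorption. Conversely, $\beta_-$ is an involution, so applying the same argument to $\beta_-(f)$ gives the reverse implication.

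Chaining these gives four statements each equivalent to $\operatorname{op}^W(f) \in \mathcal I$. Alternatively, the fourth can be reached from the second by conjugating with $U$ and using $\beta_-\mathcal F_\sigma = \mathcal F_\sigma\beta_-$, since $\sigma(-w,z) = \sigma(w,-z)$.

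The only point needing care is the well-definedness of these symbol manipulations on $\mathcal S'(\mathbb R^{2n})$ when $\operatorname{op}^W(f)$ is merely a tempered operator rather than a bounded one. Membership in $\mathcal I \subset \mathcal L(\mathcal H)$ already forces boundedness on whichever side holds, and then the identities from Theorem \ref{thm:fourier} and the lemma, which are valid on $\mathcal S'(\mathcal H)$, transfer it to the other side. I expect no genuine obstacle beyond this bookkeeping.
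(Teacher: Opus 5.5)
Your proposal is correct and is essentially the argument the paper intends. Part (1) follows from the lemma $\frac{1}{2^n}U\operatorname{op}^W(f) = \operatorname{op}^W(\mathcal F_\sigma(\beta_-(f)))$ by left absorption. Part (2) follows by combining this with the right-ideal theorem and the identity $U\operatorname{op}^W(f)U = \operatorname{op}^W(\beta_-(f))$. Your converse via $A = 2^n U B$ (using $U^2 = I$) is just a more direct form of the paper's ``apply the reasoning once again'' step. Your labels ``middle'' and ``last'' equivalence do not literally match the positions in the chain, but this is harmless: showing that each of the four statements is equivalent to $\operatorname{op}^W(f) \in \mathcal I$ suffices.
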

\begin{ex}
    Similarly to Example \ref{ex:left_ideal}, one can construct left-ideals which are not summability ideals. For doing so, let $X \subset \mathcal H$ be a closed subspace and then consider
    \begin{align*}
        \mathcal I_X := \{ A \in \mathcal L(\mathcal H): ~A|_X = 0\}.
    \end{align*}
    Then, $\mathcal I_X$ is a left ideal of $\mathcal L(\mathcal H)$. Similarly, if $Y \subset \mathcal H$ is also a closed subspace, then $\mathcal I_Y \cap \mathcal I_X^\ast$ is a two-sided ideal of $\mathcal L(\mathcal H)$ which is not defined by a summability condition of singular values.
\end{ex}

\begin{rem}
    We want to end this discussion by mentioning that the decisive factor in this discussion was that we are working within the Weyl quantization. Analogous results do not hold true, for example, for the Kohn-Nirenberg quantization. Nevertheless, entirely analogous reasoning can be employed in the setting of quantum harmonic analysis over the phase space $\Xi = G \times \widehat{G}$ when $G$ is a 2-regular locally compact abelian group, that is, a locally compact abelian group on which the map $g \mapsto g+g$ is an isomorphism of topological groups. In this setting, one can properly define the Weyl operators and the Weyl calculus, see \cite[Example 2.7]{Fulsche_Galke2025} for the precise setting leading to the connection of the Weyl calculus and quantum harmonic analysis. The key identity $\operatorname{op}^W(\delta_0) = c U$ is still satisfied for some constant $c > 0$, which was the key step in the above reasoning. 
\end{rem}

\bibliographystyle{amsplain}
\bibliography{References}

%
\noindent Robert Fulsche\\
\href{fulsche@math.uni-hannover.de}{\Letter fulsche@math.uni-hannover.de}\\
Hannover\\
GERMANY

\end{document}